\renewenvironment{proof}[1][]{\begin{trivlist}
\item[\hspace{\labelsep}{\bf\noindent Proof#1.\/}] }{\qed\end{trivlist}}
\def \bvx {circle[radius = .1][fill = black]}
\def \smvx {circle[radius = .07][fill = black]}
\tikzstyle{edge}=[very thick]
\definecolor{bostonuniversityred}{rgb}{0.8, 0.0, 0.0}
\definecolor{arsenic}{rgb}{0.23, 0.27, 0.29}
\tikzstyle{diredge}=[postaction={decorate,decoration={markings,
\tikzstyle{bidiredge}=[postaction={decorate,decoration={markings,
\tikzstyle{mdiredge}=[line width = 1 pt, postaction={decorate,decoration={markings,
\newcommand{\fitellipsis}[2] 
{\draw [fill=white]let \p1=(#1), \p2=(#2), \n1={atan2(\y2-\y1,\x2-\x1)}, \n2={veclen(\y2-\y1,\x2-\x1)}
    in ($ (\p1)!0.5!(\p2) $) ellipse [ x radius=\n2/2+0cm, y radius=0.25cm, rotate=\n1];
}
\newcommand{\remove}[1]{}
\newcommand{\floor}[1]{
    \lfloor #1 \rfloor
}
\newcommand{\eps}{\varepsilon}
\newcommand{\aad}{$1$-almost antidirected }
\newtheorem{theorem}{\bf Theorem}[section]
\newtheorem{lemma}[theorem]{\bf Lemma}
\newtheorem{corollary}[theorem]{\bf Corollary}
\newtheorem{proposition}[theorem]{\bf Proposition}
\newtheorem{question}[theorem]{\bf Question}
 \theoremstyle{definition}
 \newtheorem*{definition*}{\bf Definition}
\def\ex{\mathrm{ex}}
\begin{document}

\title{\vspace{-0.9cm}Counting $H$-free orientations of graphs}
\date{}
\author{
Matija Buci\'c\thanks{Department of Mathematics, ETH Z\"urich, Switzerland. Email: \href{mailto:matija.bucic@math.ethz.ch} {\nolinkurl{matija.bucic@math.ethz.ch}}.}
\and
Oliver Janzer\thanks{Department of Mathematics, ETH Z\"urich, Switzerland. Email: \href{mailto:oliver.janzer@math.ethz.ch} {\nolinkurl{oliver.janzer@math.ethz.ch}}.
Research supported by an ETH Z\"urich Postdoctoral Fellowship 20-1 FEL-35.}
\and
Benny Sudakov\thanks{Department of Mathematics, ETH Z\"urich, Switzerland. Email:
\href{mailto:benjamin.sudakov@math.ethz.ch} {\nolinkurl{benjamin.sudakov@math.ethz.ch}}.
Research supported in part by SNSF grant 200021\_196965.}
}

\maketitle

\begin{abstract}
    In 1974, Erd\H os posed the following problem. Given an oriented graph $H$, determine or estimate the maximum possible number of $H$-free orientations of an $n$-vertex graph. When $H$ is a tournament, the answer was determined precisely for sufficiently large $n$ by Alon and Yuster. 
    In general, when the underlying undirected graph of $H$ contains a cycle, one can obtain accurate bounds by combining an observation of Kozma and Moran with celebrated results on the number of $F$-free graphs. As the main contribution of the paper, we resolve all remaining cases in an asymptotic sense, thereby giving a rather complete answer to Erd\H os's question. 
    Moreover, we determine the answer exactly when $H$ is an odd cycle and $n$ is sufficiently large, answering a question of Ara\'ujo, Botler and Mota.
\end{abstract}

\section{Introduction}

Given a fixed graph $H$, over all $n$-vertex graphs $G$ what is the maximum number of $2$-edge colourings of $G$ which contain no monochromatic copy of $H$? This very natural question was first asked by Erd\H{o}s and Rothschild \cite{E-R} in 1974 for the special case of $H=K_3$. This case was resolved by Yuster \cite{yuster} for large $n$ who in turn asked what happens for $H=K_k$. This problem, again for large $n$, was solved by Alon, Balogh, Keevash and Sudakov \cite{ABKS} who in addition solved it for $H$ being any edge-colour critical graph (defined as graphs in which the removal of some edge decreases the chromatic number). The question has attracted a lot of attention over the years and has been generalised in a number of ways; we point the interested reader to the numerous papers citing \cite{ABKS}, e.g. \cite{Bal06,LPRS09,PY12,PSY17}.

In the same paper from 1974, Erd\H os \cite{E-R} also raised the following closely related problem. 
Given an oriented graph $H$, what is the maximum possible number of $H$-free orientations of an $n$-vertex graph?
Let $D(n,H)$ denote the answer to this question. Erd\H os asked to determine or estimate $D(n,H)$. For an undirected graph~$F$, let $\ex(n,F)$ be the maximum number of edges in an $n$-vertex $F$-free graph. Writing $F$ for the underlying undirected graph of $H$, we have a trivial lower bound $D(n,H)\geq 2^{\ex(n,F)}$ since if $G$ is an $F$-free graph, then any orientation of $G$ is $H$-free. Alon and Yuster \cite{alon-yuster} showed that when $H$ is a tournament, this simple lower bound gives the correct answer. That is, if $T$ is a tournament on $k$ vertices, then $D(n,T)=2^{t_{k-1}(n)}$ holds for sufficiently large $n$, where $t_{k-1}(n)$ denotes the number of edges in the $(k-1)$-partite Tur\'an graph on $n$ vertices. Their general argument, which follows the approach used in \cite{ABKS}, relies on a regularity lemma and hence results in a requirement for $n$ to be extremely large. For the special case of $3$-vertex tournaments they give a different approach which solves the problem for the transitive tournament on three vertices for all $n$ and only requires $n$ to be larger than about $10000$ for the strongly connected 3-cycle $C_3$. As an aside, we remark that the number of $H$-free orientations of a random graph $G=G(n,p)$ has also been studied for various choices of $H$, e.g. for $H=C_k$ (see \cite{AKMP14,CKMM20}). Recently, Ara\'ujo, Botler and Mota \cite{mota} determined $D(n,C_3)$ for all values of $n$ and asked what happens if $H$ is an arbitrary strongly connected directed cycle, even if we are only interested in the case of large $n$. Our first result is an exact answer to their question for odd cycles. 

\begin{theorem}\label{thm:main}
    For any $k \ge 1$ there exists $n_0=n_0(k)$ such that if $n \ge n_0$, then $$D(n,C_{2k+1})=2^{\floor{n^2/4}}.$$
\end{theorem}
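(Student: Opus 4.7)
The matching lower bound is immediate: the complete bipartite graph $K_{\lfloor n/2\rfloor,\lceil n/2\rceil}$ contains no odd cycle, so every one of its $2^{\lfloor n^2/4\rfloor}$ orientations is vacuously $C_{2k+1}$-free.

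For the upper bound, let $G$ be an $n$-vertex graph with $e(G) = \lfloor n^2/4\rfloor + t$; we must show the number of $C_{2k+1}$-free orientations of $G$ is at most $2^{\lfloor n^2/4\rfloor}$. If $t \leq 0$ this is trivial since $G$ has only $2^{e(G)}$ orientations in total, so assume $t \geq 1$. The goal is to save a factor of $2^t$ from the trivial bound, using that each undirected $(2k+1)$-cycle in $G$ has only $2$ orientations out of $2^{2k+1}$ that form a directed $C_{2k+1}$.

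My plan is to fix a maximum cut $(A,B)$ of $G$ and split the edges into \emph{crossing} edges (between $A$ and $B$) and \emph{internal} edges (inside a part). Since any bipartite graph on $n$ vertices has at most $\lfloor n^2/4\rfloor$ edges, the number of crossing edges is at most $\lfloor n^2/4\rfloor$, so there are at least $t$ internal edges. I would orient the crossing edges uniformly at random and show that, in expectation, the number of orientations of the internal edges extending to a $C_{2k+1}$-free orientation of $G$ is at most $1$. Then the total number of valid orientations of $G$ is at most $2^{|E(A,B)|} \cdot 1 \leq 2^{\lfloor n^2/4\rfloor}$. The mechanism behind the $\le 1$ bound is the following: for an internal edge $\{u,v\}$ with $u,v\in A$, any directed path of length $2k$ from $u$ to $v$ in the oriented bipartite part forbids the orientation $v \to u$ of the edge (as the union would be a directed $C_{2k+1}$), and likewise in the reverse direction. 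Supersaturation implies that between any two same-part vertices there are many undirected paths of length $2k$ in the bipartite part, so by concentration, directed paths exist in both directions with very high probability, and the internal edge then admits no valid orientation at all.

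The principal obstacle is the joint analysis across all $\ge t$ internal edges: the ``forced'' events are correlated through the same random orientation of the crossing edges, so showing that the individual factor-$2$ savings essentially multiply will require a second-moment or entropy-type argument rather than a naive union bound. A secondary subtlety is that the supersaturation input requires the bipartite part to be locally dense around $u$ and $v$, which may fail at vertices of abnormally small degree; these should be handled by a preprocessing step that removes them and absorbs their contribution into a lower-order term. When $G$ is far from bipartite, stability for $\ex(n,C_{2k+1}) = \lfloor n^2/4\rfloor$ combined with the abundance of edge-disjoint odd cycles already gives an exponential saving, so the delicate conditioning analysis is really required only in the near-extremal regime $t = O(1)$ where $G$ is only one or two edges beyond the Tur\'an bound but still non-bipartite.
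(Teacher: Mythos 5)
Your lower bound is the same trivial one as in the paper, and your general plan for the near-bipartite regime (random/typical orientations of the crossing edges block both orientations of each internal edge via directed paths of length $2k$) is in the same spirit as the paper's argument. However, the proposal has two genuine gaps, both at the points you yourself flag as ``obstacles,'' and they are not minor technicalities but the actual content of the proof. First, the claim that the expected number of valid internal extensions is at most $1$ (more precisely, at most $2^{\lfloor n^2/4\rfloor-|E(A,B)|}$) is exactly the hard step: the blocking events for different internal edges are correlated through the same crossing orientation, atypical crossing orientations (e.g.\ all edges oriented from $A$ to $B$) force nothing at all and admit $2^{\#\mathrm{internal}}$ extensions, and since the target bound $2^{\lfloor n^2/4\rfloor}$ is exact there is no room for the ``lower-order term'' into which you propose to absorb low-degree vertices. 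The paper handles precisely this regime not by a second-moment computation but by an exact iterative vertex-removal argument: it first discards orientations containing a biased pair of large sets, then uses a deterministic embedding lemma for directed paths in the remaining ``relevant'' orientations to show that either some vertex with many same-part neighbours, or the two endpoints of an internal edge, have so few admissible orientations of their incident edges that deleting them strictly increases the surplus in the exponent; iterating yields a contradiction. Your sketch does not supply a mechanism with this kind of exact bookkeeping.

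Second, the far-from-bipartite regime is not actually covered by your appeal to ``stability combined with the abundance of edge-disjoint odd cycles.'' Stability theorems for $\ex(n,C_{2k+1})$ apply to $C_{2k+1}$-free graphs, whereas here $G$ itself may contain many copies of $C_{2k+1}$; what is needed is a stability statement for graphs with at least $2^{\lfloor n^2/4\rfloor}$ $C_{2k+1}$-free \emph{orientations}, and proving that implication is the content of the paper's Lemma~\ref{lem:stability}, which requires the directed regularity lemma and a counting argument over cluster graphs. Moreover, the quantitative saving from edge-disjoint copies of $C_{2k+1}$ is only a factor $1-2^{-2k}$ per copy, hence at most $2^{-c_k n^2}$ with $c_k$ exponentially small in $k$, which is far too weak when the excess $t$ above $\lfloor n^2/4\rfloor$ (or the number of internal edges) is a constant fraction of $n^2$. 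So neither regime of your plan is complete as stated, and the missing steps coincide with the two main lemmas of the paper's proof.
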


In fact, our argument, which follows closely the ideas of both \cite{ABKS} and \cite{alon-yuster}, applies for any $H$ which is an orientation of an edge-colour critical graph, showing that $D(n,H)=2^{\ex(n,F)}$ for large enough $n$ in such cases.

A natural next question is what happens for other graphs. As suggested by Erd\H{o}s, obtaining an approximate understanding of the answer is already interesting. Using a short and beautiful argument 
involving a version of the classical Sauer--Shelah lemma on VC dimension of sets, Kozma and Moran \cite{KM13} proved that the number of orientations of a 
\emph{fixed} graph $G$ without $H$ is always at most the number of $F$-free subgraphs of $G$, where as usual $F$ is the underlying graph of $H$. Hence, one can obtain upper bounds for $D(n,H)$ from known results on the number of $n$-vertex $F$-free graphs, which is an extensively studied subject on its own.

In the following result we trade precision for generality. It is obtained by combining the result of Kozma and Moran with that of Erd\H os, Frankl and R\"odl \cite{EFR86}.

\begin{proposition}\label{prop:general}
Given an oriented graph $H$ with underlying graph $F$, then 
$$D(n,H)=2^{\ex(n,F)+o(n^2)}.$$    
\end{proposition}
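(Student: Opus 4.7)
The plan is to establish matching upper and lower bounds, each in one short step. The lower bound $D(n,H) \geq 2^{\ex(n,F)}$ is immediate from the trivial observation already made in the introduction: take any $n$-vertex $F$-free graph $G$ with exactly $\ex(n,F)$ edges (for instance the Tur\'an graph $T_{\chi(F)-1}(n)$, or any other extremal $F$-free graph); since $G$ contains no copy of $F$ at all, every orientation of $G$ is automatically $H$-free, and distinct orientations of $G$ are distinct orientations of a labelled $n$-vertex graph.

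For the upper bound I would chain the two results that the statement of the proposition already announces. Fix an arbitrary $n$-vertex graph $G$. By the Kozma--Moran inequality \cite{KM13}, the number of $H$-free orientations of $G$ is at most the number of $F$-free subgraphs of $G$. Any $F$-free subgraph of $G$ is in particular an $F$-free labelled graph on vertex set $V(G)$, so the number of $F$-free subgraphs of $G$ is at most the total number of $F$-free graphs on $n$ labelled vertices (equivalently, the number of $F$-free subgraphs of $K_n$). By the theorem of Erd\H os, Frankl and R\"odl \cite{EFR86}, this count is $2^{\ex(n,F)+o(n^2)}$. Taking the maximum over $G$ gives $D(n,H) \leq 2^{\ex(n,F)+o(n^2)}$, which together with the lower bound yields the proposition.

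There is essentially no obstacle to overcome: both ingredients are black boxes taken from the literature, and the only step that requires any checking at all is that the number of $F$-free subgraphs of an arbitrary $n$-vertex graph $G$ is maximised (up to the bound we need) by $G=K_n$, which follows because labelled $F$-free subgraphs of $G$ inject into labelled $F$-free subgraphs of $K_n$ on the same vertex set. The proof therefore consists of little more than quoting \cite{KM13} and \cite{EFR86} in the correct order.
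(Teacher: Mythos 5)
Your proposal is correct and follows essentially the same route as the paper: the trivial lower bound $D(n,H)\ge 2^{\ex(n,F)}$ from an extremal $F$-free graph, combined with the Kozma--Moran inequality $D(n,H)\le N(n,F)$ and the Erd\H{o}s--Frankl--R\"odl bound $N(n,F)=2^{\ex(n,F)+o(n^2)}$. (Two harmless asides: the Tur\'an graph $T_{\chi(F)-1}(n)$ need not have exactly $\ex(n,F)$ edges, though some extremal graph does, and Erd\H{o}s--Frankl--R\"odl is stated for non-bipartite $F$, but for bipartite $F$ the bound $N(n,F)\le 2^{O(\ex(n,F)\log n)}=2^{o(n^2)}$ makes the claim trivial, so the argument stands.)
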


This result established the answer up to lower order terms for any oriented graph whose underlying graph is non-bipartite, since $\ex(n,F)=\Theta(n^2)$ for any non-bipartite $F$. The case of bipartite underlying graphs turns out to be more difficult, mostly due to the fact that their Tur\'an numbers are much less well understood. The next proposition relies on a result of Ferber, McKinley and Samotij \cite{FMS20}.

\begin{proposition} \label{prop:with cycle}
    Let $F$ be a graph containing a cycle, and assume that there exists a real number $\alpha$ such that $\ex(n,F)=\Theta(n^{\alpha})$. Then for any orientation $H$ of $F$, we have
    $$D(n,H)=2^{\Theta(n^{\alpha})}.$$
\end{proposition}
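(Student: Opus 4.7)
The plan is to combine a trivial lower bound construction with an upper bound obtained by stitching together the Kozma--Moran transfer and the Ferber--McKinley--Samotij enumeration theorem.

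For the lower bound, the observation is immediate: if $G$ is any $n$-vertex $F$-free graph, then every one of its $2^{|E(G)|}$ orientations is vacuously $H$-free, since a copy of $H$ would project to a copy of $F$ in $G$. Taking $G$ to be an extremal $F$-free graph gives $D(n,H) \ge 2^{\ex(n,F)} = 2^{\Omega(n^{\alpha})}$ directly from the hypothesis.

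For the upper bound, I would first invoke the result of Kozma and Moran~\cite{KM13}: for any fixed graph $G$, the number of $H$-free orientations of $G$ is at most the number of $F$-free subgraphs of $G$. Since every $F$-free subgraph of an $n$-vertex $G$ is, in particular, an $F$-free graph on the vertex set $V(G)$, maximising over $G$ yields that $D(n,H)$ is at most the number of labelled $F$-free graphs on vertex set $[n]$. By the theorem of Ferber, McKinley and Samotij~\cite{FMS20}, whenever $F$ contains a cycle this count is $2^{O(\ex(n,F))}$, which by hypothesis equals $2^{O(n^{\alpha})}$. Combining with the lower bound yields the claimed estimate $D(n,H) = 2^{\Theta(n^{\alpha})}$.

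I do not anticipate any real obstacle here: the argument is essentially a packaging exercise, with all of the genuine content hidden inside the two external results that are being composed. The only subtlety worth flagging is that the cycle-containing hypothesis on $F$ is not cosmetic but is precisely what allows us to invoke \cite{FMS20}; for forests, $\ex(n,F)$ is linear in $n$ while the number of $F$-free graphs on $[n]$ can be far larger than $2^{O(\ex(n,F))}$, so the same strategy would break down and the bipartite acyclic case will need to be handled separately.
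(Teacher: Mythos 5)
Your argument is correct and coincides with the paper's own proof: the lower bound is the trivial one from orientations of an extremal $F$-free graph, and the upper bound combines Theorem \ref{thm:kozmamoran} (Kozma--Moran) with the Ferber--McKinley--Samotij bound $N(n,F)\leq 2^{Cn^{\alpha}}$, which applies precisely because $F$ contains a cycle and $\ex(n,F)=\Theta(n^{\alpha})$. Your closing remark about why the acyclic case escapes this strategy matches the paper's discussion as well.
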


Although it is generally believed that such an $\alpha$ exists for every bipartite graph $F$, this is still a wide open conjecture. For a survey of the vast literature about the extremal number of bipartite graphs, see \cite{FS13}.

The above results provide us with good understanding of $D(n,H)$ whenever the underlying graph of $H$ contains a cycle. This leads to the natural question of what happens in the remaining case, namely when $H$ is an orientation of a forest $F$. Since in this case $\ex(n,F)=\Theta(n)$ (provided that $F$ has at least two edges), we have $D(n,H)=2^{\Omega(n)}$. On the other hand, the approach of bounding $D(n,H)$ with the number of $n$-vertex $F$-free graphs only gives $D(n,H)=2^{O(n\log n)}$. Up to this point, all the results are consistent with $D(n,H)=2^{\Theta(\ex(n,F))}$,  which might suggest that in the remaining cases, when $H$ is an orientation of a forest $F$, the same should hold.

As a natural starting point one might ask what happens with perhaps the simplest example of an oriented forest, namely the directed path on $k$ edges, which we denote by $P_k$. We show that in this case the trivial lower bound is indeed tight up to a multiplicative absolute constant in the exponent.

\begin{theorem}\label{thm:path}
    For any $k\geq 2$ and any $n\in \mathbb{N}$,
    $$D(n,P_k)\leq 2^{3kn}.$$
\end{theorem}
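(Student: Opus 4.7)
The approach will be to exploit the classical correspondence, due to Gallai--Hasse--Roy--Vitaver, between orientations avoiding long directed paths and proper vertex colourings. The point is that any orientation with no $P_k$ admits a natural ``level function'' taking $k$ values, and this function, together with the orientation, contains redundant information: knowing the levels is enough to reconstruct the orientation.

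More concretely, given an $n$-vertex graph $G$ and any $P_k$-free orientation $D$ of $G$, I would define, for each vertex $v$, the quantity $\ell(v)$ to be the number of edges in the longest directed path of $D$ ending at $v$. Since $D$ contains no directed path with $k$ edges, $\ell$ takes values in $\{0, 1, \dots, k-1\}$. For every directed edge $u \to v$ of $D$, appending this edge to any longest directed path ending at $u$ produces a directed path ending at $v$, so $\ell(v) \geq \ell(u) + 1$. In particular $\ell(u) \neq \ell(v)$ whenever $uv$ is an edge, so $\ell$ is a proper $k$-colouring of the underlying graph of $G$.

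The key observation is that the map $D \mapsto \ell$ is injective. Indeed, for each edge $uv$ of $G$, the orientation $D$ must direct the edge from the endpoint with smaller $\ell$-value to the endpoint with larger $\ell$-value (since we have just shown the head has strictly larger level). Hence the number of $P_k$-free orientations of $G$ is at most the number of proper $k$-colourings of $G$, which in turn is trivially at most $k^n$. Using the simple inequality $k \leq 2^k$ for $k \geq 1$, we conclude
$$
D(n,P_k) \leq k^n \leq 2^{kn} \leq 2^{3kn},
$$
which is the desired bound (in fact, a somewhat stronger one).

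There is no real obstacle here beyond recognising that Gallai--Hasse--Roy--Vitaver is the right tool: once one sets up the canonical level function, properness of the induced colouring and the injectivity of the map both fall out immediately, and the numerical estimate comparing $k^n$ with $2^{3kn}$ is trivial.
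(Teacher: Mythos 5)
There is a genuine flaw, and in fact the conclusion you reach is false, not merely unproved. The problematic step is the claim that $\ell(v)\ge \ell(u)+1$ for every directed edge $u\to v$: appending the edge $u\to v$ to a longest directed path ending at $u$ need not produce a directed path, because $v$ may already lie on that path. The directed triangle $a\to b\to c\to a$ shows this concretely: every vertex has $\ell=2$, so $\ell$ is not a proper colouring at all. The correct Gallai--Hasse--Roy--Vitaver argument avoids this by defining the levels with respect to a maximal \emph{acyclic} spanning subdigraph $D'$ of $D$; then adjacent vertices do get distinct levels, but the injectivity you need evaporates, since edges of $D$ not in $D'$ are oriented from the \emph{higher} level to the lower one, so the orientation cannot be reconstructed from the level function. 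GHRV gives you $\chi(G)\le k$ for the underlying graph, nothing more.

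Moreover, no repair can give the bound $D(n,P_k)\le k^n$ that you claim, because it contradicts the trivial lower bound. Recall $P_k$ has $k$ edges and $k+1$ vertices, so if $k\mid n$ and $G$ is the disjoint union of $n/k$ copies of $K_k$, then \emph{every} orientation of $G$ is $P_k$-free, giving $D(n,P_k)\ge 2^{\binom{k}{2}n/k}=2^{(k-1)n/2}$, which exceeds $k^n=2^{n\log_2 k}$ already for $k\ge 7$. This shows the truth is $2^{\Theta(kn)}$ and that any correct proof must be sensitive to the number of edges one can pack into a $P_k$-free-orientable graph, not just to a $k$-colouring. The paper's proof is of a different nature: it processes the vertices one by one with an algorithm that reveals the orientation of the edges at the current vertex, maintains for each unprocessed vertex a state $(a_v,b_v)$ recording (roughly) the lengths of directed paths ending and starting at $v$ within the revealed part, shows these states stay bounded by $k$ in any $P_k$-free orientation, and then bounds the number of completions by a product $\prod_v (k+2)2^{2k-a_v-b_v}$ via reverse induction, yielding $(k+2)^n2^{2kn}\le 2^{3kn}$.
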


This result also suggests that $D(n,H)$ should always be $2^{\Theta(\ex(n,F))}$. However, perhaps surprisingly, it turns out that this is not the case. As we will see in a moment, there are even orientations of a path for which the answer is $2^{\Theta(n\log n)}$. We completely resolve the remaining cases by showing that for every oriented forest $H$ with at least two edges, either $D(n,H)=2^{\Theta(n)}$ or $D(n,H)=2^{\Theta(n\log n)}$. We also provide a precise characterisation for when each case occurs.

An oriented graph $H$ is said to be \emph{antidirected} if there exists a bipartition $V(H)=A\cup B$ of the vertex set such that every $u\in A$ has $0$ incoming edges and every $v\in B$ has $0$ outgoing edges. It is not too hard to see that antidirected forests are exactly those oriented forests $H$ with the property that any $n$-vertex directed graph with at least $Cn$ edges, for sufficiently large $C$, contains $H$. So in particular, $D(n,H)=2^{\Theta(n)}$ for any antidirected forest $H$. Despite this it turns out that there are many more oriented forests for which the answer is also $2^{\Theta(n)}$. The following definition precisely captures all such oriented forests.

\begin{definition*}
    We call an oriented graph $H$ \emph{1-almost antidirected} if there exists a bipartition $V(H)=A\cup B$ of the vertex set such that there are no edges inside $H[A]$ and $H[B]$, every $u\in A$ has at most one incoming edge and every $v\in B$ has at most one outgoing edge.
\end{definition*}

For example, the path with the usual orientation is $1$-almost antidirected, but there are orientations which are not. See Figure \ref{fig:path} for an orientation of the path with $5$ vertices which is not $1$-almost antidirected (the second and the fourth vertex must be on the same side in a bipartition, but the former has two out-edges and the latter has two in-edges).

We are now ready to state the value of $D(n,H)$ for oriented forests.

\begin{theorem}\label{thm:trees}
    Let $H$ be an oriented forest with at least two edges. Then 
    $$D(n,H)=
    \begin{cases}
    2^{\Theta(n)} & \text{if }H\text{ is }1\text{-almost antidirected}\\
    2^{\Theta(n\log n)} & \text{otherwise.}
    \end{cases}
    $$
\end{theorem}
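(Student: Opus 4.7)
The plan is to establish Theorem~\ref{thm:trees} via two lower bounds and two upper bounds, mirroring the $1$-almost antidirected dichotomy. The universal lower bound $D(n,H)\ge 2^{\Omega(n)}$ is trivial: take $G$ to be any extremal $F$-free graph, which has $\ex(n,F)=\Theta(n)$ edges, and observe that every orientation of $G$ is automatically $H$-free. The corresponding universal upper bound $D(n,H)\le 2^{O(n\log n)}$ is also immediate: by the Kozma--Moran inequality, the number of $H$-free orientations of any $G$ is at most the number of $F$-free subgraphs of $G$, and since $F$-free $n$-vertex graphs have at most $\ex(n,F)=O(n)$ edges, there are only $\binom{\binom{n}{2}}{O(n)}=2^{O(n\log n)}$ such subgraphs of $K_n$.

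The stronger lower bound $D(n,H)\ge 2^{\Omega(n\log n)}$ for $H$ not $1$-almost antidirected is proved by the following construction. Take $G=K_{n/2,n/2}$ with parts $X,Y$; for each matching $M$ in $G$ consider the orientation in which edges of $M$ are oriented from $X$ to $Y$ and all other edges from $Y$ to $X$. Distinct matchings give distinct orientations, yielding at least $(n/2)!=2^{\Omega(n\log n)}$ candidates (one per perfect matching) in which every $x\in X$ has out-degree at most $1$ and every $y\in Y$ has in-degree at most $1$. Any embedding of $H$ into $G$ must send the bipartition $V(H)=A\cup B$ to $(X,Y)$ in one of two ways. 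Unpacking the definition, ``$H$ not $1$-almost antidirected'' is precisely the statement that both ways are obstructed: in one direction some $a\in A$ has $d^+_H(a)\ge 2$ or some $b\in B$ has $d^-_H(b)\ge 2$, while in the other the roles of in- and out-degree swap. Each obstruction forces the corresponding embedding to fail against the out/in-degree-$1$ constraints, so all such orientations are $H$-free.

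The main remaining task is the upper bound $D(n,H)\le 2^{O(n)}$ for $1$-almost antidirected $H$, which generalises Theorem~\ref{thm:path}. My plan is to encode each $H$-free orientation of $G$ using $O_H(n)$ bits via a combination of a ``role function'' $c\colon V(G)\to V(H)\cup\{\ast\}$ indicating which vertex of $H$ each $v\in V(G)$ would play in any candidate embedding, together with a sparse subgraph $G'\subseteq G$ of maximum degree $O_H(1)$ that records the locations of the ``reverse'' edges (the matching of reverse edges that the definition allows). The role function contributes $(|V(H)|+1)^n=2^{O(n)}$ possibilities and the bounded-degree subgraph $G'$ another $2^{O(n)}$. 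The non-trivial step is a supersaturation/embedding lemma: any orientation admitting no such bounded-complexity certificate must already contain $H$. Roughly, if the edges of $G$ not captured by $G'$ within a single role-class of $c$ are too numerous, one greedily embeds $H$ by first locating its antidirected part via a Tur\'an-type lemma and then extending across the at most one reverse edge per vertex that $1$-almost antidirectedness permits.

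I expect this embedding lemma to be the principal obstacle. Unlike the purely antidirected case, where standard density or minimum-degree arguments suffice, a $1$-almost antidirected $H$ can be avoided in arbitrarily dense bipartite oriented graphs: already for $H=P_2$, every complete bipartite graph oriented with all edges going one way is $H$-free. Hence the argument cannot rely on raw minimum-degree bounds and must exploit the full bipartite-plus-reverse-matching structure of $H$ rather than simply dropping low-degree vertices.
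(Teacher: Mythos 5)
Your easy pieces are correct and coincide with the paper's: the trivial $2^{\Omega(n)}$ lower bound, the Kozma--Moran $2^{O(n\log n)}$ upper bound, and the $2^{\Omega(n\log n)}$ lower bound via $K_{\lfloor n/2\rfloor,\lfloor n/2\rfloor}$ oriented one way except for a perfect matching. (One small point there: rather than arguing that ``the bipartition'' of $H$ must map onto $(X,Y)$ ``in one of two ways'' --- which is not literally true for a forest, whose components can be placed independently --- the clean argument is to take the bipartition of $V(H)$ \emph{induced} by a hypothetical embedding and observe that it would witness $H$ being $1$-almost antidirected; this is how the paper phrases it, and your version is easily repaired.)

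The genuine gap is the heart of the theorem: the $2^{O(n)}$ upper bound for \aad forests, which you only outline and whose key lemma you yourself flag as unproved. Worse, the proposed certificate cannot work as stated. You claim that a ``role function'' ($(|V(H)|+1)^n=2^{O(n)}$ choices) together with a subgraph $G'\subseteq G$ of maximum degree $O_H(1)$ recording the reverse edges contributes only $2^{O(n)}$ possibilities; but the number of bounded-degree subgraphs of a dense $n$-vertex graph is $2^{\Theta(n\log n)}$ --- indeed the $\lfloor n/2\rfloor!$ perfect matchings of $K_{\lfloor n/2\rfloor,\lfloor n/2\rfloor}$, i.e.\ exactly the objects in your own lower-bound construction, are such subgraphs. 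So ``role function plus bounded-degree reverse subgraph'' is an encoding with $2^{\Theta(n\log n)}$ values, and the entire difficulty of the theorem is precisely to show that $H$-freeness forbids the reverse edges from forming an essentially arbitrary matching; your supersaturation lemma, as sketched (``if the uncaptured edges are too numerous, embed $H$ greedily via a Tur\'an-type lemma''), cannot supply this, since, as you note yourself, density alone never forces a \aad $H$. The paper resolves this with a quite different mechanism: a universal oriented tree $H_{s,t}$ (with $H_{k+1,k}$ containing every $k$-vertex \aad forest), a revealing/peeling argument (Lemma~\ref{lem:find H1t}) in which the cost of specifying orientations is charged to vertices that get discarded, an induction on layers via Lemma~\ref{lem:extend H} and Theorem~\ref{thm:find Hst}, and finally an induction on $|V(H)|$ that splits $V(G)$ into vertices of large and small out-degree so that the bipartite counting theorem applies. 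Without an argument of comparable substance replacing that machinery, your proposal establishes only the $2^{\Theta(n\log n)}$ case and the trivial bounds, not the $2^{\Theta(n)}$ case of Theorem~\ref{thm:trees}.
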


\begin{figure}
	\centering
	\begin{tikzpicture}[scale=0.8]
	
	\node at (-1, 0) {$Q:$};
	\node (A) at (0, 0) {};
	\node (B) at (2, 0) {};
	\node (C) at (4, 0) {};
	\node (D) at (6, 0) {};
	\node (E) at (8,0) {};
	\node at (3.5, -1) {$D(n,Q)=2^{\Theta(n\log n)}$};

	\foreach \i in {A,B,...,E}
	{
	    \draw (\i) \smvx;
	}
	
	\draw[mdiredge, line width = 1pt] ($(B)$) -- ($(A)$);
	\draw[mdiredge, line width = 1pt] ($(B)$) -- ($(C)$);
	\draw[mdiredge, line width = 1pt] ($(C)$) -- ($(D)$);
	\draw[mdiredge, line width = 1pt] ($(E)$) -- ($(D)$);

    \node at (-13, 0) {$P_4:$};
    \node (a) at (-12, 0) {};
	\node (b) at (-10, 0) {};
	\node (c) at (-8, 0) {};
	\node (d) at (-6, 0) {};
	\node (e) at (-4,0) {};
	\node at (-8.5, -1) {$D(n,P_4)=2^{\Theta(n)}$};
	
	\foreach \i in {a,b,...,e}
	{
	    \draw (\i) \smvx;
	}
	
	\draw[mdiredge, line width = 1pt] ($(a)$) -- ($(b)$);
	\draw[mdiredge, line width = 1pt] ($(b)$) -- ($(c)$);
	\draw[mdiredge, line width = 1pt] ($(c)$) -- ($(d)$);
	\draw[mdiredge, line width = 1pt] ($(d)$) -- ($(e)$);

	\end{tikzpicture}
	\caption{$Q$ is an example of an oriented path which is not $1$-almost antidirected}
	\label{fig:path}
\end{figure}
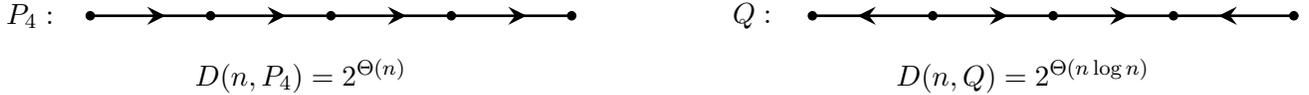

\textbf{Notation.} In this paper, no loops, multiple edges or bidirected edges are allowed in our oriented graphs. The underlying (undirected) graph of an oriented graph is the graph obtained by removing the orientations from all edges. An in-neighbour $u$ of a vertex $v$ is a vertex for which $uv$ is an edge. An in-leaf in an oriented tree is a leaf whose only edge is directed towards the leaf. We define out-neighbours and out-leaves analogously.

\section{Counting $H$-free orientations of graphs} 

\subsection{General oriented graphs} \label{sec:general}
In this section, we show how to deduce Proposition \ref{prop:general} and Proposition \ref{prop:with cycle} from known results.

Let $H$ be an arbitrary oriented graph. As before, let us write $F$ for the underlying undirected graph of $H$. For a graph $G$, write $D(G,H)$ for the number of $H$-free orientations of $G$ and denote by $N(G,F)$ the number of $F$-free subgraphs of $G$ on the same vertex set as $G$. Moreover, write $N(n,F)$ for the number of $F$-free graphs with vertex set $[n]$. Kozma and Moran \cite{KM13} proved that $N(G,F)$ is an upper bound for $D(G,H)$.

\begin{theorem}[Kozma--Moran \cite{KM13}] \label{thm:kozmamoran}
    Let $F$ be an undirected graph and let $H$ be an orientation of $F$. Then for any undirected graph $G$,
    $$D(G,H)\leq N(G,F).$$
    
    In particular, for any $n\in \mathbb{N}$,
    $$D(n,H)\leq N(n,F).$$
\end{theorem}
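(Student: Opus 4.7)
The plan is to use a classical shattering inequality from VC theory, namely Pajor's lemma (sometimes called the sandwich lemma), which states that for any family $\mathcal{F} \subseteq 2^E$ one has $|\mathcal{F}| \le |\mathrm{sh}(\mathcal{F})|$, where $\mathrm{sh}(\mathcal{F})$ is the collection of subsets of $E$ shattered by $\mathcal{F}$. The hint from the paper's discussion of \cite{KM13} is precisely that some Sauer--Shelah-type tool should be applied, and Pajor's form is the cleanest.

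First I would set up the following encoding. Fix an arbitrary reference orientation $\sigma_0$ of $G$, so that every orientation $\sigma$ of $G$ can be identified with the subset $S_\sigma \subseteq E(G)$ consisting of those edges on which $\sigma$ disagrees with $\sigma_0$. Let $\mathcal{F} = \{ S_\sigma : \sigma \text{ is an } H\text{-free orientation of } G\} \subseteq 2^{E(G)}$, so that $|\mathcal{F}| = D(G,H)$.

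The key step is the claim that every set shattered by $\mathcal{F}$ is $F$-free as a subgraph of $G$. Suppose for contradiction that $T \subseteq E(G)$ is shattered and contains a copy of $F$ on some vertex set $U$, with edge set $E_F \subseteq T$. Consider the orientation of $E_F$ that makes this copy into a copy of $H$; let $T^* \subseteq T$ be the subset of $T$ that realises exactly this pattern relative to $\sigma_0$ (that is, $e \in T^*$ iff the desired $H$-orientation of $e$ disagrees with $\sigma_0$, for $e \in E_F$, and extended arbitrarily on $T \setminus E_F$). Since $T$ is shattered by $\mathcal{F}$, there exists an $H$-free orientation $\sigma \in \mathcal{F}$ with $S_\sigma \cap T = T^*$. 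But by construction $\sigma$ orients $E_F$ to form a copy of $H$, contradicting $H$-freeness. Hence $\mathrm{sh}(\mathcal{F})$ consists entirely of edge sets of $F$-free subgraphs of $G$, and in particular $|\mathrm{sh}(\mathcal{F})| \le N(G,F)$.

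Combining, $D(G,H) = |\mathcal{F}| \le |\mathrm{sh}(\mathcal{F})| \le N(G,F)$, which is the desired inequality; the $n$-vertex statement is the special case $G = K_n$. The main obstacle is really just the bookkeeping of the shattering argument -- making sure the encoding via a reference orientation lines up so that prescribing the restriction to $T$ in $\{0,1\}^T$ is equivalent to prescribing an orientation of the edges of $T$ -- after which Pajor's lemma does all the heavy lifting.
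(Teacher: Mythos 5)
Your proposal is correct and follows essentially the same route as the paper: fix a reference orientation to identify orientations of $G$ with subsets of $E(G)$, apply the Pajor (general Sauer--Shelah) inequality $|\mathcal{A}|\leq|\mathrm{str}(\mathcal{A})|$, and observe that any shattered edge set must be $F$-free, giving $D(G,H)\leq N(G,F)$. The argument, including the contradiction step showing a shattered set containing a copy of $F$ would force an $H$-free orientation to contain $H$, matches the paper's proof.
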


Perhaps surprisingly, the proof uses an inequality about set systems. Given a set system $\mathcal{A}$ on ground set $X$, we say that $S\subset X$ is \emph{shattered} by $\mathcal{A}$ if for every $T\subset S$, there exists some $A\in \mathcal{A}$ with $A\cap S=T$. Let us write $\mathrm{str}(\mathcal{A})$ for the collection of subsets of $X$ which are shattered by $\mathcal{A}$. A general version of the celebrated Sauer--Shelah lemma \cite{Paj85} states that $|\mathcal{A}|\leq |\mathrm{str}(\mathcal{A})|$.

To see that this inequality implies Theorem \ref{thm:kozmamoran}, fix an orientation $Q$ of $G$. Identify the power set of $E(G)$ with the set of orientations of $G$ by identifying $A\subset E(G)$ with the orientation of $G$ which differs from $Q$ precisely on the edge set $A$. Let $\mathcal{A}$ be the collection of subsets of $E(G)$ which are identified with $H$-free orientations of $G$. Clearly, $|\mathcal{A}|=D(G,H)$. On the other hand, assume that some $S\subset E(G)$ is shattered by $\mathcal{A}$. Then the graph formed by the edges in $S$ is $F$-free. Indeed, if it did contain $F$ as a subgraph, then there would exist an orientation of the edges in $S$ which contains a copy of $H$. This means that $S$ could not be shattered by $\mathcal{A}$. Thus, $|\mathrm{str}(\mathcal{A})|\leq N(G,F)$ and Theorem \ref{thm:kozmamoran} follows.

The function $N(n,F)$ has been extensively studied. Similarly to $D(n,H)$, we have the trivial lower bound $N(n,F)\geq 2^{\ex(n,F)}$ since if $G$ is an $F$-free graph, then any subgraph of $G$ is also $F$-free. This has been shown to be almost tight first for complete graphs by Erd\H os, Kleitman and Rothschild \cite{EKR76} and then for general non-bipartite graphs by Erd\H os, Frankl and R\"odl \cite{EFR86}, who proved that $N(n,F)=2^{\ex(n,F)+o(n^2)}$ (see \cite{balogh03} for an improved error term). Combined with Theorem \ref{thm:kozmamoran} and the trivial lower bound for $D(n,H)$, this implies Proposition \ref{prop:general}.

However, this gives an unsatisfactory answer for bipartite graphs $F$ as in that case $\ex(n,F)=O(n^{2-\eps_F})$ for some $\eps_F>0$. Since any $F$-free graph on vertex set $[n]$ has at most $\ex(n,F)$ edges, we get a straightforward upper bound
$$N(n,F)\leq \sum_{k=0}^{\ex(n,F)} \binom{\binom{n}{2}}{k},$$
which implies that $N(n,F)\leq 2^{C \ex(n,F)\log n}$ for some constant $C$. The logarithmic factor is necessary when $F$ is acyclic with maximum degree at least $2$. Indeed, in this case $\ex(n,F)=O(n)$, but it is easy to see that there are $2^{\Omega(n\log n)}$ graphs on vertex set $[n]$ with maximum degree $1$. On the other hand, it is not known in general whether the logarithmic factor is necessary for graphs that contain a cycle. In this direction, settling a classical conjecture of Erd\H{o}s it was shown by Morris and Saxton \cite{MS16} that $N(n,C_{2k})=2^{O_k(n^{1+1/k})}$, generalising a previous result of Kleitman and Winston \cite{K-W}, and complementing the classical Bondy--Simonovits bound $\ex(n,C_{2k})=O_k(n^{1+1/k})$ \cite{BS74}. Balogh and Samotij \cite{BS1,BS2} established a similar result for complete bipartite graphs in place of even cycles. These results were generalised greatly by Ferber, McKinley and Samotij \cite{FMS20}. They showed that if $F$ is a graph containing a cycle and there are positive constants $\alpha$ and $A$ such that $\ex(n,F)\leq An^{\alpha}$, then there exists a constant $C$ depending only on $\alpha$, $A$ and $F$ such that for all $n$,
    $N(n,F)\leq 2^{Cn^{\alpha}}$.
This result, combined with Theorem \ref{thm:kozmamoran} and $D(n,H)\geq 2^{\ex(n,F)}$, implies Proposition \ref{prop:with cycle}.

\subsection{Directed path}
In this subsection we will prove \Cref{thm:path}. Let $G$ be an $n$-vertex graph. Our task is to show that there are at most $2^{3kn}$ orientations of $G$ which do not contain $P_k$. Let us fix a canonical ordering of the vertices of $G$. We will count the number of orientations with the help of the following algorithm. It takes as input an orientation of $G$. In each step, it processes a vertex and updates the current ``state'' for every vertex that has not been processed yet. We will show that, provided the orientation is $P_k$-free, the potential states are severely restricted. We then use this to bound the number of possible orientations.

\textbf{Algorithm:} Initially, we assign to every vertex $v$ a state $(a_v,b_v)=(0,0)$. At step $i$ we have a sequence of already processed vertices $v_1,\ldots, v_{i-1}$ and possibly the next vertex to be processed $v_i$. 
\begin{itemize} 
\item If $v_i$ is not specified, it is chosen as a vertex in $V(G) \setminus \{v_1,\ldots, v_{i-1}\}$ with largest $a_{v_i}$, breaking ties by choosing such a $v_i$ first in the canonical ordering. 
\item We now proceed to process $v_i$. We consider all edges between $v_i$ and not already processed vertices.
\begin{itemize}
    \item If all these edges are oriented towards $v_i$, we do not specify $v_{i+1}$ and continue to the next step.
    \item Otherwise we choose $v_{i+1}$ to be the out-neighbour $v$ of $v_i$ with largest value of $b_{v}$ (breaking ties by choosing such a $v$ first in the canonical ordering) and
    \begin{itemize}
        \item we increase $a_u$ by one for any (non-processed) out-neighbour $u$ of $v_i$ and 
        \item we increase $b_u$ by one for any (non-processed) in-neighbour $u$ of $v_i$ which had $b_u\le b_{v_{i+1}}$. 
    \end{itemize} 
\end{itemize} 
\end{itemize}

Let us first motivate what is going on here. The algorithm reveals the orientation of the edges of $G$ bit by bit; specifically at step $i$ it will reveal the orientation of all (not already revealed) edges incident to $v_i$, the vertex we are currently processing. In particular, by the end of step $i$ the algorithm has revealed the orientation of all edges incident to $v_1,\ldots, v_i$. Note that the orientation of these edges determines uniquely the first $i$ steps of the algorithm, regardless of how the remaining edges are oriented (in other words, the algorithm is by this point completely independent of the orientation of the remaining edges). We roughly speaking think of the states $a_v$ and $b_v$ as the length of a directed path ending and starting at $v$, respectively, only using the already revealed edges, or in other words using only already processed vertices (apart from $v$ itself). This is captured more precisely in the following lemma. 

\begin{lemma}\label{lem:path-to-state}
    If the algorithm assigned state $(a_v,b_v)$ to $v$ (at any time) then the orientation contains a directed path of length $a_v$ ending in $v$ and a path of length $b_v-1$ starting with $v$. 
\end{lemma}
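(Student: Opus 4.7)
The plan is to prove the lemma by induction on the step number $t$ of the algorithm, with the statement of the lemma itself as the inductive invariant; I will additionally need to track which vertices appear in the witnessing paths in order to guarantee simplicity later. The base case (before any step is processed) is trivial because $(a_v,b_v)=(0,0)$ for every $v$, and the single vertex $v$ realises a directed path of length zero ending (or starting) at $v$.

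For the inductive step, the values $(a_v,b_v)$ change at step $t$ only when $v_{t+1}$ is specified: each unprocessed out-neighbour $u$ of $v_t$ has $a_u$ increased by one, and each unprocessed in-neighbour $u$ of $v_t$ with $b_u\le b_{v_{t+1}}$ has $b_u$ increased by one. I plan to handle both updates by extending an inductive path. For an $a$-update at $u$, the natural idea is to append the edge $v_t\to u$ to the inductive path of length $a_{v_t}$ ending at $v_t$; provided $a_{v_t}\ge a_u$ (old value), a suffix of the resulting path has exactly the desired length $a_u+1$. For a $b$-update at $u$, one prepends $u\to v_t\to v_{t+1}$ to the inductive path of length $b_{v_{t+1}}-1$ starting at $v_{t+1}$, obtaining a path of length $b_{v_{t+1}}+1\ge b_u+1$, a prefix of which realises the required length.

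The main obstacles are to establish the length inequality $a_{v_t}\ge a_u$ needed above and to ensure that all extended paths are simple. The length bound is immediate when $v_t$ was chosen at step $t$ as the unprocessed vertex of largest $a$-value, since $u$ is also unprocessed. When $v_t$ was specified by the largest-$b$ rule at the end of step $t-1$, the inequality need not hold directly, and I would instead extend along the chain $v_s\to v_{s+1}\to\cdots\to v_t$, where $v_s$ is the most recent vertex selected by the largest-$a$ rule; a careful accounting of when $a_u$ was incremented along this chain shows this path together with $v_t\to u$ is long enough to be truncated to the required length. For simplicity, I would augment the inductive invariant with a description of the vertex sets that each witnessing path may use (essentially, the endpoint together with processed vertices, for $\pi_A$; and $v$ together with vertices appearing in a specific time window for $\pi_B$), and exploit the largest-$b$ selection rule for $v_{t+1}$ to avoid overlap problems caused by short cycles in the orientation; the hard part will be verifying that this stronger invariant is in fact preserved by the $b$-update in the presence of such cycles.
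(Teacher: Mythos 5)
Your treatment of the $a$-values is essentially the paper's argument: when $v_t$ was selected by the largest-$a$ rule the append is immediate, and otherwise one walks back along the successor chain to the most recent vertex $v_s$ chosen by the largest-$a$ rule, uses $a_u\le a_{v_s}$ at that moment together with the fact that $a_u$ can increase at most once per step of the chain, and appends $v_sv_{s+1}\cdots v_t u$ to the inductive path ending at $v_s$ (which lies inside the already processed vertices, so simplicity is automatic). You defer the bookkeeping, but this part is sound and matches the paper.

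The genuine gap is in the $b$-part, and it is exactly the point you flag and leave unverified. Your plan prepends $u\to v_t\to v_{t+1}$ to the \emph{current} witness path of $v_{t+1}$, i.e.\ the path created at the \emph{last} update of $b_{v_{t+1}}$. This can fail to be simple: if $b_{v_{t+1}}$ was last updated at step $t-1$, which happens precisely when $v_{t+1}\to v_{t-1}$, $v_{t-1}\to v_t$ and $v_t\to v_{t+1}$ form a directed triangle (nothing in the algorithm forbids this), then the stored witness for $v_{t+1}$ begins $v_{t+1}\to v_{t-1}\to v_t\to\cdots$ and already contains $v_t$, so your prepended walk revisits $v_t$. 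The largest-$b$ selection rule does not exclude this configuration, and the natural cutoff invariants (``the witness for $v_{t+1}$ otherwise uses only vertices processed before some time'') are destroyed by the very update that creates such a path. The paper's resolution is a concrete extra device your proposal is missing: prove the statement for the final value $b_t$ at the moment $v_t$ is processed, maintain the stronger invariant that the witness for $v_t$ otherwise uses only $\{v_1,\dots,v_{t-2}\}$ (so it avoids the most recently processed vertex), and build it by prepending $v_tv_iv_{i+1}$ at the \emph{penultimate} update of $b_{v_t}$ rather than the last one. Since $b_{v_t}$ cannot be updated at step $t-1$ (at that step either nothing is updated or the successor is $v_t$ itself, which would require a bidirected edge), the penultimate update occurs at some step $i\le t-3$; hence the inductive witness for $v_{i+1}$ avoids $v_i$ and lies in $\{v_1,\dots,v_{i-1}\}\subseteq\{v_1,\dots,v_{t-2}\}$, and the one unit of length lost by skipping the final update is exactly the slack built into the statement (a path of length $b_v-1$, not $b_v$). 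Without this idea, or an equivalent substitute, your induction on the $b$-side does not close.
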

\begin{proof}    
    Let $v_1,\ldots, v_n$ be the order in which the vertices are processed. Observe that both $a_v$ and $b_v$ are non-decreasing throughout the process and are never updated once we process $v$. Let $a_t,b_t$ denote the final value of $a_{v_{t}},b_{v_t}$ so in particular at the point when we process $v_t$. 
    
    We are first going to show by induction on $t$ that there is a path of length at least $a_t$ ending in $v_t$ which only uses previously processed vertices. For the base case $t=1$, we know that $a_1=0$ so the claim is trivial. Now given $v_t$, let $i<t$ be the last index at which point $a_{v_t}$ was updated (if $a_{v_t}$ was never updated, then $a_t=0$ and our claim is trivial). Take the smallest $j$ such that $v_{j},v_{j+1},\ldots,v_i$ form a directed path oriented towards $v_i$. In particular, this means that $v_{j}$ was not an out-neighbour of $v_{j-1}$ and it was chosen as a vertex with maximum $a_{v}$ among all yet unprocessed vertices. As $v_t$ was not yet processed at this stage, it follows that we had $a_{v_t}\leq a_{v_j}$ at step $j$. Since after this point $a_{v_t}$ could only have been incremented when processing $v_j,\ldots, v_i$, we know that $a_t \le a_j+i-j+1$. In addition, we know by induction that there is a path of length $a_j$ ending at $v_j$ and using vertices only from $\{v_1,\dots,v_j\}$ to which we can append $v_{j}\ldots v_iv_t$ to obtain the desired path of length $a_j+i-j+1\ge a_t$.
    
    Turning now to the $b_v$'s, we are going to show by induction on $t$ that there is a path of length at least $b_t-1$ starting with $v_t$, which otherwise only uses vertices in $\{v_1,\ldots, v_{t-2}\}$. The base case $t=1$ is trivial. Also, observe that if $b_{v_t}$ got incremented at most once, i.e. $b_t\leq 1$, the claim also holds trivially. Let us now consider the penultimate (second to last) index $i$ at which point $b_{v_t}$ was updated. Observe that by definition, our algorithm will only increment $b_{v_t}$ when processing $v_j$ if $v_tv_j$ is an edge and $v_jv_{j+1}$ is an edge. In particular, this implies that we did not update $b_{v_t}$ at step $t-1$, and hence $i \le t-3$, since we chose $i$ to be the penultimate index which updated $b_{v_t}$. The fact that we incremented $b_{v_t}$ when processing $v_i$ means that at the time $b_{v_t}$ was at most $b_{v_{i+1}}$. Since this was the penultimate time $b_{v_t}$ was incremented, we know that $b_t \le b_{i+1}+2$. By induction we can find a path of length $b_{i+1}-1$ starting with $v_{i+1}$ which otherwise only uses vertices from the set $\{v_1,\dots,v_{i-1}\}$. This means that we can prepend $v_tv_iv_{i+1}$ to this path to obtain a new one of length $b_{i+1}-1+2 \ge b_t-1$ which only uses vertices from the set $\{v_1,\dots,v_{i+1}\}$ (apart from $v_t$). But $i\leq t-3$, so $\{v_1,\dots,v_{i+1}\}$ is a subset of $\{v_1,\dots,v_{t-2}\}$, as desired. 
\end{proof}

The above lemma tells us that if our orientation was $P_k$-free, then $a_v,b_v \le k$ throughout the process. Keeping this in mind, the following lemma considers the part of the orientation revealed before step $i$ of our algorithm and gives a bound on the number of ways in which one can complete this partial orientation into a $P_k$-free one. 

\begin{lemma}\label{lem:algorithm}
    Assume that up to step $i$ our algorithm processed vertices $v_1,\dots, v_{i-1}$ and assigned the state $(a_v,b_v)$ to any remaining vertex $v$. We fix the orientation of edges incident to vertices $v_1,\dots, v_{i-1}$ which leads to this state. There are at most $$\prod_{v \in V(G) \setminus \{v_1,\dots, v_{i-1}\}} (k+2)\cdot 2^{2k-a_v-b_v}$$ ways to orient the remaining edges to complete the orientation without creating a $P_k$.
\end{lemma}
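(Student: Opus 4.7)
My plan is to argue by downward induction on $i$. Let $W_i := V(G) \setminus \{v_1, \dots, v_{i-1}\}$ be the set of unprocessed vertices at step $i$, and let $f(i)$ denote the number of $P_k$-free completions of the partial orientation revealed through step $i$. The base case $i = n+1$ is trivial, since $W_{n+1} = \emptyset$ and there is a unique completion. For the inductive step I would decompose over the orientation $\sigma$ of the edges from $v_i$ to the other vertices of $W_i$,
\[
f(i) \;\le\; \sum_{\sigma} f(i+1),
\]
apply the inductive bound to $f(i+1)$, and divide through by $\prod_{v \in W_i \setminus \{v_i\}} (k+2) \cdot 2^{2k - a_v - b_v}$. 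The task then reduces to proving
\[
\sum_\sigma \prod_{v \in W_i \setminus \{v_i\}} 2^{-(a'_v - a_v) - (b'_v - b_v)} \;\le\; (k+2) \cdot 2^{2k - a_{v_i} - b_{v_i}},
\]
where $(a'_v, b'_v)$ is the state after processing $v_i$ under $\sigma$. We may assume $f(i) > 0$, otherwise the bound is trivial; in that case \Cref{lem:path-to-state} forces $a_v \le k-1$ and $b_v \le k$ for every $v \in W_i$, since otherwise the revealed partial orientation would already contain a $P_k$.

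Now I would split $\sigma$ according to the algorithm. In \emph{Case 1}, every edge from $v_i$ is oriented into $v_i$: no state changes, contributing $1$ to the sum. In \emph{Case 2}, $v_{i+1} = w$ is determined by $\sigma$. Reading off the update rule, each out-neighbor of $v_i$ contributes a factor $2^{-1}$ via $a'_u = a_u + 1$, and each in-neighbor $u$ with $b_u \le b_w$ contributes $2^{-1}$ via $b'_u = b_u + 1$. The crucial cancellation is that a non-$w$ neighbor $u$ of $v_i$ with $b_u < b_w$, or with $b_u = b_w$ and canonical label larger than $w$, can orient either way---as an out-neighbor it contributes $2^{-1}$, and as an in-neighbor it also contributes $2^{-1}$ (since $b_u \le b_w$)---so summing over its two orientations yields a factor of exactly $1$. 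The remaining non-$w$ neighbors (those with $b_u > b_w$, or with $b_u = b_w$ and smaller canonical label than $w$) are forced to be in-neighbors by the choice of $v_{i+1} = w$, and among these only those with $b_u = b_w$ contribute a $2^{-1}$. Together with $w$'s own $2^{-1}$, the contribution of orientations with $v_{i+1} = w$ therefore equals $2^{-1 - |H_2(w)|}$, where $H_2(w)$ is the set of neighbors $u$ of $v_i$ with $b_u = b_w$ and canonical label smaller than $w$.

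It remains to sum over $w$. Grouping $w$ by $\beta = b_w$, if $v_i$ has $m_\beta$ neighbors with $b$-value $\beta$ then, in canonical order, the $j$-th such $w$ contributes $2^{-j}$, so $\sum_{w:\,b_w = \beta} 2^{-1 - |H_2(w)|} = \sum_{j=1}^{m_\beta} 2^{-j} < 1$. Since every neighbor of $v_i$ in $W_i$ has $b_u \le k$, at most $k+1$ values of $\beta$ occur, and Case 2 contributes at most $k+1$; adding Case 1's contribution of $1$, the total is at most $k+2$. Finally, $a_{v_i} + b_{v_i} \le 2k-1$ gives $k+2 \le (k+2) \cdot 2^{2k - a_{v_i} - b_{v_i}}$, closing the induction. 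The delicate point is to get the cancellation in Case 2 right: every ``free'' neighbor contributes exactly $2^{-1}$ irrespective of its orientation, so the sum collapses into a telescoping geometric series indexed by the distinct $b$-values of the neighbors of $v_i$---this is precisely why the algorithm builds in the canonical-order tie-breaking when choosing $v_{i+1}$.
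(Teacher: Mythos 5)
Your proof is correct and follows essentially the same route as the paper: reverse induction on $i$, reducing the step to showing that summing the factors $2^{-(\text{total state increase})}$ over all orientations of the edges at $v_i$ gives at most $k+2$, which is then absorbed by the factor $(k+2)\cdot 2^{2k-a_{v_i}-b_{v_i}}$. The only difference is bookkeeping in Case 2: the paper groups orientations by the maximum $b$-value (at most $2^d$ orientations, each contributing exactly $2^{-d}$), while you group by the chosen vertex $w=v_{i+1}$ and sum an exact geometric series per $b$-value using the canonical tie-breaking; both yield the same bound of at most $k+1$ for that case.
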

Before turning to the proof, note that for $i=1$ no part of the orientation was specified and $a_v,b_v=0$ for every $v$, so the lemma tells us there are at most $(k+2)^n2^{2kn}\le 2^{3kn}$ orientations of $G$ without a $P_k$, establishing \Cref{thm:path}.
\begin{proof}
    Observe first that \Cref{lem:path-to-state} guarantees that $a_v,b_v \le k$ as otherwise any orientation we produce would contain a $P_k$.
    
    We will prove this by reverse induction on $i$. For the base case of $i=n$ there are no remaining edges to orient so the claim holds trivially, since $a_{v_n},b_{v_n} \le k$. 

    Let us assume that it holds if we start from step $i+1$ and any state. Given the orientation of the edges incident to $v_1,\ldots, v_{i-1}$, we can determine which vertex will be $v_i$. We will now consider all the ways in which we can extend our partial orientation to include the orientation of all edges incident to $v_i$. That is, we consider all the ways we can orient the edges between $v_i$ and $W:=V(G) \setminus \{v_1\ldots, v_{i}\}$. For each extended partial orientation, we will use the induction hypothesis to bound the number of ways it can be completed into a $P_k$-free orientation. Summing over all choices for the orientation of the edges between $v_i$ and $W$, we will get the desired bound. 
    
    If $v_i$ has all its unspecified edges (i.e. edges to $W$) oriented towards $v_i$ then the state of every remaining vertex remains unchanged and the induction hypothesis tells us that there are $\prod_{v \in W} (k+2)\cdot 2^{2k-a_v-b_v}$ ways to complete it into a $P_k$-free orientation.
    
    Otherwise our algorithm will choose $v_{i+1}$ to maximise $b_v$ among out-neighbours $v \in W$ of $v_i$. Assume this maximum is equal to $b$. Let us denote by $d$ the number of neighbours $v\in W$ of $v_i$ (in the underlying undirected graph $G$) which have $b_v$ at most $b$. We know that any other neighbour of $v_i$ in $W$ must be an in-neighbour of $v_i$ (by maximality of $b$). In particular, there are at most $2^{d}$ orientations of the edges incident to $v_i$ which result in this choice of $b$.
    For any such orientation, we claim that our algorithm increased the sum $\sum_{v \in W} (a_v+b_v)$ by $d$. Indeed, any out-neighbour of $v_i$ had its $a_v$ incremented and any in-neighbour among the $d$ neighbours with $b_v \le b$ had their $b_v$ incremented. This means that regardless of how we orient, by the induction hypothesis the number of ways to complete any of these orientations is at most $\frac{1}{2^d}\prod_{v \in W} (k+2)\cdot 2^{2k-a_v-b_v}.$ Since there are $2^d$ possible orientations of the edges incident to $v_i$ (given this choice of $b$) and $k+1$ choices for $0\le b \le k$, this gives at most 
    $(k+1)\prod_{v \in W} (k+2)\cdot 2^{2k-a_v-b_v}$ different ways to complete our initial orientation. Adding this to the contribution of the case without any out-neighbours of $v_i$ and using $a_{v_i},b_{v_i} \le k$ we obtain the claimed bound.
\end{proof}

\subsection{General trees}
In this section we will complete the proof of \Cref{thm:trees}. As already discussed in \Cref{sec:general}, we always have $D(n,H)\le 2^{O(n\log n)}$ when $H$ is an orientation of a forest.
Let us now take $G=K_{\lfloor n/2 \rfloor,\lfloor n/2 \rfloor}$ (with an extra isolated vertex if $n$ is odd), and orient all its edges from one part of the bipartition to the other (say from left to right), except for a single perfect matching which we orient in the other direction. Observe first that there are $\lfloor n/2 \rfloor !=2^{\Omega(n\log n)}$ such orientations, since this is the number of choices for the matching that we have. On the other hand, if we can embed an oriented forest $H$ in such an orientation, it must be $1$-almost antidirected, since the vertices of $H$ embedded in the left part of $G$ have in-degree at most $1$ (coming from the matching edges) and vertices embedded in the right part have out-degree at most $1$. This shows that if $H$ is not \aad then $D(n,H)=2^{\Theta(n\log n)}$, as claimed in the second part of \Cref{thm:trees}.

To establish the first part, we need to show that for any \aad forest $H$ there are at most $2^{O(n)}$ $H$-free orientations of any $n$-vertex graph. We will actually show that this holds for a certain universal oriented tree $H$ which contains all $k$-vertex \aad oriented forests. We define this universal oriented tree recursively, layer by layer. 

The starting point is the tree $H_{1,t}$, which is defined as follows. It has a root $v$ which has one in-neighbour $v_0$ and $t$ out-neighbours $v_1,\ldots, v_t$. Furthermore, $v_0$ has $t$ in-neighbours and each of $v_1,\ldots, v_t$ has a single out-neighbour and $t$ in-neighbours in addition to $v$. See \Cref{fig:H12} for an illustration. In our recursive definition it will be convenient to have another building block which we call $H_{1,t}^{-}$ and which is obtained from $H_{1,t}$ by deleting the subtree rooted at $v_0$. See \Cref{fig:Hminus12} for an illustration. $H_{s,t}$ is now defined by taking $H_{s-1,t}$ and appending a copy of $H_{1,t}$ to every out-leaf\footnote{Recall that an out-leaf is a leaf whose only edge is oriented away from the leaf.} and a copy of $H_{1,t}^{-}$ to every in-leaf. See \Cref{fig:H22} for an illustration.

Observe first that $H_{k,k}$ contains any \aad tree on $k$ vertices. This is due to the fact that in $H_{k,k}$ every non-leaf vertex at even depth has one in-neighbour and at least $k$ out-neighbours and every vertex at odd depth has one out-neighbour and at least $k$ in-neighbours, which allows one to simply greedily embed any \aad tree (starting from the root of $H_{k,k}$ to ensure that a leaf of $H_{k,k}$ is not encountered). Observe also that $H_{k+1,k}$ contains any \aad \emph{forest} on $k$ vertices. This follows since if we remove the root and its neighbours, the remaining oriented graph contains many (at least $k$) pairwise vertex-disjoint subtrees which are isomorphic to $H_{k,k}$, and we can embed the \aad trees making up our \aad forest into separate ones. With this in mind, the following theorem is the key result we need to prove in order to establish the remaining case of \Cref{thm:trees}.

\begin{figure}
\RawFloats
\begin{minipage}[t]{0.6\textwidth}
\centering
\captionsetup{width=\textwidth}

	\begin{tikzpicture}[scale=0.45]
	
	\node (A) at (0, 0) {$v$};
	\node (B) at (-8, -3) {$v_0$};
	\node (C) at (0, -3) {$v_{1}$};
	\node (D) at (8, -3) {$v_2$};
	
	\node (E) at (-10,-6) {};
	\node (F) at (-6,-6) {};
	\node (G) at (-2,-6) {};
	\node (H) at (0,-6) {};
	\node (I) at (2,-6) {};
	\node (J) at (6,-6) {};
	\node (K) at (8,-6) {};
	\node (L) at (10,-6) {};

    \draw[diredge] (B) -- (A);
    \draw[diredge] (A) -- (C);
    \draw[diredge] (A) -- (D);
    \draw[diredge] (E) -- (B);
    \draw[diredge] (F) -- (B); 
    \draw[diredge] (C) -- (G); 
    \draw[diredge] (H) -- (C); 
    \draw[diredge] (I) -- (C); 
    \draw[diredge] (D) -- (J);
    \draw[diredge] (K) -- (D);
    \draw[diredge] (L) -- (D);

	\end{tikzpicture}
	\caption{$H_{1,2}$}
	\label{fig:H12}
\end{minipage}\hfill
\begin{minipage}[t]{0.34\textwidth}
	\centering
	\begin{tikzpicture}[scale=0.45]
	
	\node (A) at (0, 0) {$v$};
	\node (C) at (0, -3) {$v_{1}$};
	\node (D) at (8, -3) {$v_2$};

		\node (G) at (-2,-6) {};
	\node (H) at (0,-6) {};
	\node (I) at (2,-6) {};
	\node (J) at (6,-6) {};
	\node (K) at (8,-6) {};
	\node (L) at (10,-6) {};
	
	\draw[diredge] (A) -- (C);
	\draw[diredge] (A) -- (D);
	\draw[diredge] (C) -- (G);
	\draw[diredge] (H) -- (C);
	\draw[diredge] (I) -- (C);
	\draw[diredge] (D) -- (J); 
	\draw[diredge] (K) -- (D);
	\draw[diredge] (L) -- (D);

	\end{tikzpicture}
	\caption{$H^-_{1,2}$}
	\label{fig:Hminus12}
\end{minipage}
\end{figure}

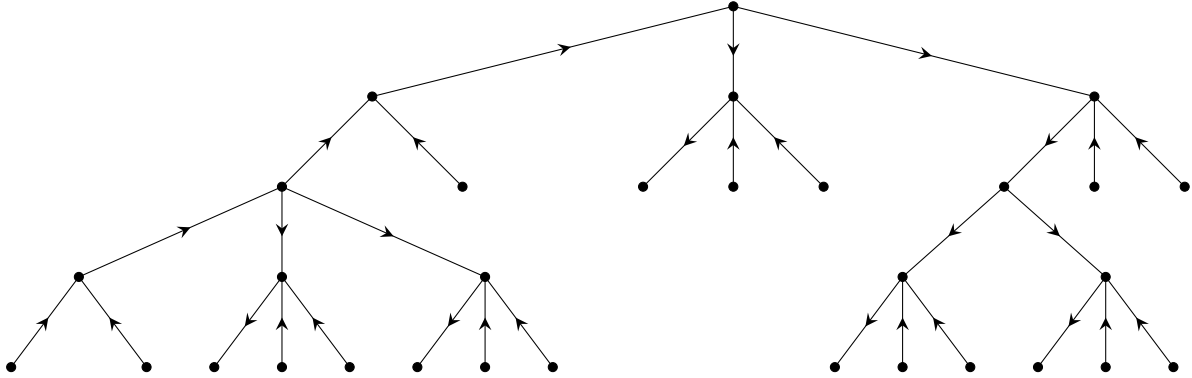
\begin{figure}
	\centering
	\begin{tikzpicture}[scale=0.6]
	
	\node (A) at (0, 0) {};
	\node (B) at (-8, -2) {};
	\node (C) at (0, -2) {};
	\node (D) at (8, -2) {};
	
	\node (E) at (-10,-4) {};

	\node (F) at (-6,-4) {};
	\node (G) at (-2,-4) {};
	\node (H) at (0,-4) {};
	\node (I) at (2,-4) {};
	\node (J) at (6,-4) {};
	\node (K) at (8,-4) {};
	\node (L) at (10,-4) {};
	
	\node (M) at (-14.5,-6) {};
	\node (N) at (-10,-6) {};
	\node (O) at (-5.5,-6) {};
	
	\node (P) at (3.75,-6) {};
	\node (Q) at (8.25,-6) {};
	
	\node (R) at (-16,-8) {};
	\node (S) at (-13,-8) {};
	
	\node (T) at (-11.5,-8) {};
	\node (U) at (-10,-8) {};
	\node (V) at (-8.5,-8) {};
	
	\node (W) at (-7,-8) {};
	\node (X) at (-5.5,-8) {};
	\node (Y) at (-4,-8) {};
	
	\node (t) at (2.25,-8) {};
	\node (u) at (3.75,-8) {};
	\node (v) at (5.25,-8) {};
	
	\node (w) at (6.75,-8) {};
	\node (x) at (8.25,-8) {};
	\node (y) at (9.75,-8) {};
	
	\foreach \i in {A,B,...,Y}
	{	
		\draw (\i) \bvx;
	}
	
	\foreach \i in {t,u,...,y}
	{	
		\draw (\i) \bvx;
	}
	
	\draw[diredge] ($(B)$) -- ($(A)$);
	\draw[diredge] ($(A)$) -- ($(C)$);
	\draw[diredge] ($(A)$) -- ($(D)$);
	\draw[diredge] ($(E)$) -- ($(B)$);
	\draw[diredge] ($(F)$) -- ($(B)$);
	\draw[diredge] ($(C)$) -- ($(G)$);
	\draw[diredge] ($(H)$) -- ($(C)$);
	\draw[diredge] ($(I)$) -- ($(C)$);
	\draw[diredge] ($(D)$) -- ($(J)$);
	\draw[diredge] ($(K)$) -- ($(D)$);
	\draw[diredge] ($(L)$) -- ($(D)$);
	\draw[diredge] ($(M)$) -- ($(E)$);
	\draw[diredge] ($(E)$) -- ($(N)$);
	\draw[diredge] ($(E)$) -- ($(O)$);
	\draw[diredge] ($(J)$) -- ($(P)$);
	\draw[diredge] ($(J)$) -- ($(Q)$);
	\draw[diredge] ($(R)$) -- ($(M)$);
	\draw[diredge] ($(S)$) -- ($(M)$);
	\draw[diredge] ($(O)$) -- ($(W)$);
	\draw[diredge] ($(X)$) -- ($(O)$);
	\draw[diredge] ($(Y)$) -- ($(O)$);
	\draw[diredge] ($(N)$) -- ($(T)$);
	\draw[diredge] ($(U)$) -- ($(N)$);
	\draw[diredge] ($(V)$) -- ($(N)$);
	\draw[diredge] ($(Q)$) -- ($(w)$);
	\draw[diredge] ($(x)$) -- ($(Q)$);
	\draw[diredge] ($(y)$) -- ($(Q)$);
	\draw[diredge] ($(P)$) -- ($(t)$);
	\draw[diredge] ($(u)$) -- ($(P)$);
	\draw[diredge] ($(v)$) -- ($(P)$);

	\end{tikzpicture}
	\caption{Part of $H_{2,2}$}
	\label{fig:H22}
\end{figure}

\begin{theorem} \label{thm:find Hst}
	Let $k,s,t$ be positive integers. Let $G=(X,Y)$ be a bipartite graph on $n$ vertices. Then there are at most $2^{O_{k,s,t}(n)}$ orientations of $G$ which contain no $H_{s,t}$ with the root in $X$ and in which there are at most $k$ out-edges from each $y\in Y$.
\end{theorem}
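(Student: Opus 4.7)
The plan is to proceed by induction on $s$, encoding each $H_{s,t}$-free valid orientation algorithmically in the spirit of the proof of Theorem \ref{thm:path}. A valid orientation is determined by specifying, for each $y\in Y$, a subset $S_y \subseteq N_G(y)$ of size at most $k$ (the out-neighbourhood of $y$ in $X$). Naively, this gives $\prod_y\binom{d_G(y)}{\le k}$ many orientations, which may be as large as $2^{\Omega(n\log n)}$; the content of the theorem is that $H_{s,t}$-freeness is what removes the logarithmic factor.

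For the base case $s=1$, the key observation is that any $y\in Y$ with $d_G(y)\ge t+k+1$ automatically has at least $t+1$ in-neighbours in $X$ in every valid orientation, so the ``$t$ in-neighbours in $X$'' requirements at the vertices $v_0,v_1,\dots,v_t$ of $H_{1,t}$ are automatic at such high-degree $y$'s. Consequently, $H_{1,t}$-freeness at a candidate root $x\in X$ translates into a restriction on how many out-neighbours of $x$ (in $Y$) themselves have an out-neighbour in $X\setminus\{x\}$, together with a similar restriction at the in-neighbour $v_0$. I would then run an algorithm processing $X$ in a canonical order, maintaining for each unprocessed $y\in Y$ a small state summarising the partial information revealed about edges incident to $y$. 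Arguments analogous to Lemmas \ref{lem:path-to-state} and \ref{lem:algorithm} should show that, in any $H_{1,t}$-free valid orientation, the number of feasible state transitions at each step is $O_{k,t}(1)$, yielding the claimed bound $2^{O_{k,t}(n)}$.

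For the inductive step $s\ge 2$, suppose the bound is known for $s-1$. I would argue that each $H_{s,t}$-free valid orientation $D$ of $G$ can be encoded by first specifying an $H_{s-1,t}$-free valid orientation $D'$ on a canonically chosen sub-bipartite-graph $G'\subseteq G$, together with $O_{k,s,t}(n)$ additional bits that recover $D$ from $D'$. The recursive construction of $H_{s,t}$ (appending $H_{1,t}$ at every out-leaf and $H_{1,t}^{-}$ at every in-leaf of $H_{s-1,t}$) is exactly what makes this possible: $H_{s,t}$-freeness of $D$ says that for every hypothetical embedding of $H_{s-1,t}$ in $D$, some leaf fails to support the required $H_{1,t}$ or $H_{1,t}^{-}$ extension, and this local failure is controlled by the same type of argument as in the base case. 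Multiplying the inductive bound for $G'$ by the $O_{k,s,t}(1)$-bit-per-vertex extension cost yields the desired $2^{O_{k,s,t}(n)}$.

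The main difficulty I anticipate is in the base case: saving the full $\log n$ factor per vertex of $Y$ requires exploiting the bound $|S_y|\le k$ jointly with the structural consequences of $H_{1,t}$-freeness in a tight way, so that the algorithm spends only $O_{k,t}(1)$ bits per step on average, rather than $k\log n$ bits to record an arbitrary $k$-subset of a large neighbourhood. Either ingredient on its own would only give the weaker $2^{O(n\log n)}$ bound that already follows from the discussion in Section \ref{sec:general}. A secondary difficulty is ensuring that the choice of sub-bipartite-graph $G'$ in the inductive step is canonical enough not to itself contribute a $\log n$ factor per vertex.
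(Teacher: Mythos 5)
Your proposal has the right top-level shape (induction on $s$, with $s=1$ as a base case needing a genuinely new argument), which matches the paper, but both essential steps are left as hopes rather than proofs, and the mechanisms you gesture at are not shown to work. In the base case, the assertion that an algorithm "analogous to Lemmas \ref{lem:path-to-state} and \ref{lem:algorithm}" should have only $O_{k,t}(1)$ feasible transitions per step is exactly the content to be proved, and it is unclear what bounded state would play the role of $(a_v,b_v)$: in the path argument the states are lengths of directed paths, which $P_k$-freeness caps at $k$, and you give no analogous quantity that $H_{1,t}$-freeness controls. The paper's base case (Lemma \ref{lem:find H1t}) is not a state-machine argument at all but an iterative peeling: repeatedly delete vertices of $Y$ with fewer than $T+k$ neighbours in the surviving part of $X$ (paying $2^{O_{k,t}(1)}$ per deleted vertex for its edge orientations) and vertices of $X$ with no in-edge from the surviving part of $Y$, and then show that a nonempty core (every $x\in X_\ell$ with an in-neighbour in $Y_\ell$, every $y\in Y_\ell$ with at least $T+k$ neighbours in $X_\ell$, out-degrees at most $k$) forces a copy of $H_{1,t}$ rooted in $X_\ell$. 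Your observation that high-degree vertices of $Y$ automatically have many in-neighbours is one ingredient of that final embedding, but by itself it yields no counting.

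The inductive step has a more serious gap. The statement that "for every hypothetical embedding of $H_{s-1,t}$ some leaf fails to support the required extension" is quantified over embeddings and does not directly give a decomposition or an encoding; you never specify how the $O_{k,s,t}(n)$ extra bits are defined, what $G'$ is, or why the residual orientation would be $H_{s-1,t}$-free and still satisfy the out-degree hypothesis. The paper's route is different: classify the vertices of $X$ according to whether they are roots of a copy of $H_{1,t'}$ for a much larger $t'=t'(k,s,t)$ (this set depends on the orientation, but recording a subset of $X$ costs only $2^n$ --- so your worry about canonicity of $G'$ is a non-issue), apply the base case to $X_2\cup Y$, and apply the induction hypothesis with parameter $t'$ to $X_1\cup Y$, the latter justified by an extension lemma (Lemma \ref{lem:extend H}): a copy of $H^*_{s-1,t,t'}$ rooted in $X_1$ all of whose leaves root copies of $H_{1,t'}$ can be completed to $H_{s,t}$. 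That lemma is where the real work lies: the $H_{1,t'}$ copies at different leaves may intersect one another and the partial tree, and one needs the out-degree bound $k$ on $Y$ together with the boosted parameter $t'$ and the intermediate tree $H^*_{s,t,t'}$ to choose out-leaves whose attached in-neighbours $f(w)$ are distinct and avoid everything already embedded. The disjointness issue, the need to pass from $t$ to $t'$, and the root/non-root dichotomy of $X$ are absent from your plan, so as written the induction does not go through.
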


We will prove this theorem by induction on $s$. We will prove the case $s=1$ separately, as it will both serve as the base case and be useful in the induction step.

\begin{lemma} \label{lem:find H1t}
	Let $k,t$ be positive integers. Let $G=(X,Y)$ be a bipartite graph on $n$ vertices. Then there are at most $2^{O_{k,t}(n)}$ orientations of $G$ which contain no $H_{1,t}$ with the root in $X$ and in which there are at most $k$ out-edges from each $y\in Y$.
\end{lemma}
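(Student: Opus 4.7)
The plan is a degree-based split of $Y$ together with a structural extraction of $H_{1,t}$ from any orientation whose high-degree part is too unrestricted. Fix large constants $M = M(k,t)$ and $T = T(k,t)$, to be tuned by the embedding step below. Let $Y_{\mathrm{small}} := \{y \in Y : d_G(y) < M\}$ and $Y_{\mathrm{big}} := Y \setminus Y_{\mathrm{small}}$. An orientation is specified by the family $(O_y)_{y \in Y}$ of out-neighbourhoods $O_y \subseteq N(y)$ with $|O_y| \le k$. Since each $y \in Y_{\mathrm{small}}$ offers at most $\binom{M}{\le k} = O_{k,t}(1)$ choices of $O_y$, the orientations of edges incident to $Y_{\mathrm{small}}$ contribute a factor of at most $2^{O_{k,t}(n)}$ in total.

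For the hard part, fix an $H_{1,t}$-free orientation and set $Y^+ := \{y \in Y_{\mathrm{big}} : |O_y| \ge 1\}$ and $U := \bigcup_{y \in Y^+} O_y \subseteq X$. I would encode each such orientation by first specifying $U \subseteq X$ and $Y^+ \subseteq Y_{\mathrm{big}}$ (at most $2^{2n}$ choices), and then, for each $y \in Y^+$, the set $O_y \subseteq N(y) \cap U$ with $1 \le |O_y| \le k$. Using $\binom{d}{\le k} \le (d+1)^k$ together with $\log(1+x) \le x$ yields
\[\prod_{y \in Y^+} \binom{|N(y) \cap U|}{\le k} \;\le\; \exp\!\bigl(k \cdot e_G(U,Y^+)\bigr),\]
so it suffices to prove that any $(U, Y^+)$ arising from an $H_{1,t}$-free orientation satisfies $e_G(U,Y^+) = O_{k,t}(n)$.

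The core structural claim I would aim for is that in any $H_{1,t}$-free orientation, $|O_v \cap Y^+| < T$ for every $v \in U$. Granting this, summing $|O_y| \le k$ over $y \in Y^+$ and $|O_v \cap Y^+| < T$ over $v \in U$ gives $e_G(U,Y^+) \le k|Y^+| + (T-1)|U| \le (k+T)n$, as required. To prove the claim I would argue by contradiction: suppose $v \in U$ has $|O_v \cap Y^+| \ge T$. Because $v \in U$, some $y_0 \in Y^+$ has $v \in O_{y_0}$, and $y_0 \in Y_{\mathrm{big}}$ gives $d^-(y_0) \ge M - k$, large enough (once $M$ is chosen large) to supply $v_0$ together with its $t$ required in-neighbour extras in $X \setminus \{v\}$. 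It remains to select $v_1, \ldots, v_t \in O_v \cap Y^+$ with pairwise distinct out-neighbours $x^i \in O_{v_i}$; this amounts to finding a matching of size $t$ in the bipartite graph $B$ on $(O_v \cap Y^+) \times X$ whose edges are $\{(y,x) : x \in O_y\}$. Whenever such a matching exists, the in-neighbour extras for each $v_i$ are extracted greedily from its $\ge M-k$ in-neighbours, producing the forbidden $H_{1,t}$ and a contradiction.

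The main obstacle will be the case when the matching number of $B$ is less than $t$. By K\"onig's theorem, the candidates $y \in O_v \cap Y^+$ then concentrate their out-edges inside some set $X_v^* \subseteq U$ with $|X_v^*| < t$, so distinct $x^i$'s cannot be chosen and the naive embedding breaks. Resolving this likely requires either (i) a careful re-rooting of the $H_{1,t}$ search at a vertex of $X_v^*$ that receives many in-edges from $O_v \cap Y^+$, iterated until a valid embedding appears (termination being ensured by finiteness of $X$), or (ii) a refined decomposition splitting $U$ into a well-behaved part on which the claim holds and a concentrated part whose orientation structure is already determined by the additional data $(X_v^*, \{y : O_y \subseteq X_v^*\})$, encodable in $O_{k,t}(n)$ bits. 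Once this concentration case is settled, the counting above delivers the desired bound of $2^{O_{k,t}(n)}$.
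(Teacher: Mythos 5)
Your reduction to showing $e_G(U,Y^+)=O_{k,t}(n)$ is fine, but the structural claim it rests on --- that in every $H_{1,t}$-free orientation each $v\in U$ has $|O_v\cap Y^+|<T$ --- is false for $t\ge 2$, so the ``concentration case'' you flag is not a loose end but the point where the argument genuinely breaks. Take $k=1$, vertices $v,x^*\in X$, a huge set $Y'\subseteq Y$, one further vertex $y_0\in Y$, and a large set $X'\subseteq X$; join $v$ to $y_0$ and to all of $Y'$, join $x^*$ to all of $Y'$, and join $y_0$ and every $y\in Y'$ to $M$ vertices of $X'$. Orient $y_0\to v$, $v\to y$ and $y\to x^*$ for every $y\in Y'$, and orient all edges between $X'$ and $Y$ towards $Y$. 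Every vertex of $Y$ has exactly one out-edge and degree at least $M$, so $Y^+=Y$, $U=\{v,x^*\}$ and $v\in U$, yet $|O_v\cap Y^+|=|Y'|$ is unbounded; and there is no $H_{1,t}$ with root in $X$: the only vertices of $X$ with an in-edge are $v$ and $x^*$, the vertex $x^*$ has no out-edge, and all out-neighbours of $v$ share the single out-neighbour $x^*$, so the $t\ge 2$ distinct out-neighbours needed for $v_1,\dots,v_t$ do not exist. This also kills your proposed fix (i): there is no copy of $H_{1,t}$ anywhere in this orientation, so no re-rooting can produce the contradiction. Fix (ii) is closer to what is needed, but as sketched it does not give the count: the set $X_v^*$ depends on $v$, and recording a set of fewer than $t$ vertices of $X$ for each of up to $n$ problematic vertices $v$ costs on the order of $nt\log n$ bits, which is exactly the $n\log n$-type loss the lemma must avoid.

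What a correct resolution looks like is a global, iterative encoding rather than a per-vertex out-degree bound. The paper's proof peels the graph: let $X_1$ be the vertices of $X$ with an in-edge (all edges at $X\setminus X_1$ are then forced), and repeatedly delete $Y$-vertices with fewer than $T+k$ neighbours in the current $X$-set (paying $2^{T+k}$ each for the orientation of their remaining edges) and $X$-vertices left with no in-neighbour in the current $Y$-set (whose remaining edges become forced), arriving at a core $(X_\ell,Y_\ell)$ in which every vertex of $X_\ell$ has an in-edge from $Y_\ell$ and every vertex of $Y_\ell$ has at least $T+k$ neighbours in $X_\ell$. Choosing one such in-edge per vertex of $X_\ell$ produces vertex-disjoint out-stars with centres in $Y_\ell$, so the $t$ centres found at a common in-neighbour automatically have pairwise distinct out-neighbours --- precisely the system of distinct representatives your matching/K\"onig step cannot guarantee --- forcing the core to be empty and yielding the $2^{O_{k,t}(n)}$ bound. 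In your counterexample this works because all of $Y'$ gets peeled once $X'$ and then $x^*,v$ leave the core, and the corresponding orientations are paid for during the peeling rather than via a pointwise degree claim.
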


\begin{proof}
Set $T :=\max(|V(H_{1,t})|,kt)=O_{k,t}(1)$.

Let $D$ be an orientation of $G$ which contains no $H_{1,t}$ with the root in $X$ and in which there are at most $k$ out-edges from each $y\in Y$. Let $Y_1=Y$ and let $X_1$ be the set of vertices in $X$ which have at least one in-edge in $D$. Note that there are at most $2^n$ possibilities for $X_1$. Let $Y_2 \subseteq Y_1$ be the set of vertices in $Y_1$ which have at least $T+k$ neighbours in $X_1$. There are at most $2^{(T+k)(|Y_1|-|Y_2|)}$ ways to orient the edges incident to $Y_1 \setminus Y_2$. Consider the subset $X_2\subseteq X_1$ consisting of vertices which have an in-neighbour in $Y_2$. Since every vertex $y \in Y_1 \setminus Y_2$ had at most $T+k$ neighbours in $X_1$, there are at most $(T+k)(|Y_1|-|Y_2|)$ vertices in $X_1$ which have a neighbour in $Y_1 \setminus Y_2$, and we can specify the subset of them which got removed from $X_1$ to obtain $X_2$ in at most $2^{(T+k)(|Y_1|-|Y_2|)}$ many ways. We repeat as long as we can, i.e. until we obtain subsets $X_\ell\subset X_1$ and $Y_\ell\subset Y_1$ which have the following properties: every vertex of $X_\ell$ has an in-neighbour in $Y_\ell$ and every vertex in $Y_\ell$ has at least $T+k$ neighbours inside $X_\ell$. By the above counting, the number of possibilities for $X_\ell$, $Y_\ell$ and the orientations of the edges incident to $(X\setminus X_\ell)\cup (Y\setminus Y_\ell)$ is at most $2^n\cdot 2^{2(T+k)|Y_1|}\leq 2^{n+2(T+k)n}$.

If $Y_{\ell}=\emptyset$, then we have already revealed the entire orientation of $G$, so there are at most $2^{n+2(T+k)n}$ such suitable orientations. Assume that $Y_{\ell}\neq \emptyset$. We claim that this, together with the assumption that any vertex in $Y_\ell \subseteq Y$ has at most $k$ out-neighbours in $X$, guarantees that we can find a copy of $H_{1,t}$ in $D[X_\ell\cup Y_\ell]$ with the root in $X_\ell$, which is a contradiction. Indeed, fix one edge incoming from $Y_{\ell}$ at every vertex in $X_\ell$. These edges span vertex disjoint out-directed stars of size at most $k$ with centres in $Y_\ell$. In particular, there are at least $|X_{\ell}|/k$ centres. Since each centre has at least $T+k$ neighbours in $X_\ell$, at most $k$ of which can be out-neighbours, there are at least $T$ in-neighbours. Since $T \ge tk$, this means that some vertex $v \in X_\ell$ is an in-neighbour of at least $t$ distinct centres $v_1,\ldots, v_t$ of our out-stars. Picking one out-edge per star gives us an out-directed tree consisting of root $v$ and $t$ vertex-disjoint paths of length $2$. Note also that it is guaranteed that there is an in-neighbour $v_0\in Y_{\ell}$ of $v$ (which is distinct from $v_1,\ldots, v_t$ since they are its out-neighbours). What remains to be done is to find $t$ in-neighbours for each of $v_0,\ldots, v_t$ which we can do greedily since each of them has at least $T \ge |V(H_{1,t})|$ in-neighbours in $X_\ell$. So we found a copy of $H_{1,t}$ as claimed, and are done.
\end{proof}

Let us now define the oriented tree $H^*_{s,t,t'}$ by modifying $H_{s,t}$ so that every vertex in the penultimate layer has $t'$ instead of $t$ out-leaves attached to it. Note that if $t'\geq t$, then $H_{s,t'}$ contains $H^*_{s,t,t'}$ as a subgraph.

In the induction step, we will use the following key lemma.

\begin{lemma} \label{lem:extend H}
	For every $k,s,t$ there exists $t'=t'(k,s,t)$ as follows. Let $D=(X,Y)$ be a bipartite oriented graph such that there are at most $k$ out-edges from each $y\in Y$. Assume that $D$ contains a copy of $H^*_{s,t,t'}$ with the root in $X$ and assume that each leaf in this copy is the root of an $H_{1,t'}$. Then $D$ contains $H_{s+1,t}$ with the root in $X$.
\end{lemma}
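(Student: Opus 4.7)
Plan: I will choose $t' = t'(k,s,t)$ sufficiently large in terms of $k$ and $|V(H_{s+1,t})|$ (for instance, $t' \geq 2t + 10k\,|V(H_{s+1,t})|$ suffices) and build the required copy of $H_{s+1,t}$ greedily. The starting observation is that $H_{s+1,t}$ is obtained from $H_{s,t}$ by attaching an $H_{1,t}$ at every out-leaf and an $H_{1,t}^-$ at every in-leaf. Since $H^*_{s,t,t'}$ already contains $H_{s,t}$ (keep any $t$ of the $t'$ out-leaves at each penultimate vertex), all that is left is to pick these out-leaves and, at each leaf of the resulting $H_{s,t}$-subgraph, use the given attached $H_{1,t'}$ to carry out the required extension, while making sure everything remains vertex-disjoint.

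Concretely, I will initialize a set $S \subseteq V(D)$ to be the vertex set of the embedded $H^*_{s,t,t'}$ at depth at most $2s-1$, and then process its penultimate vertices one at a time. At each penultimate vertex $y$, I first (if $y$ has an in-leaf $u$) add $u$ to the embedding and attach $H_{1,t}^-$ at $u$ by picking $t$ of the $t'$ out-neighbours $z_i^u$ of $u$ (from the attached $H_{1,t'}$) whose $O(1)$-sized subtrees all avoid $S$; since each vertex of $S$ can block only a bounded number of candidates, at most $O(|S|)$ choices are bad, which is far fewer than $t'$. Then I will pick $t$ of the $t'$ out-leaves of $y$ one by one, each with an $H_{1,t}$-extension: for each new out-leaf $u_j$, the in-neighbour $z_0^{u_j}$ (fixed by the attached $H_{1,t'}$ at $u_j$) must lie outside $S$, and enough of the $t'$ out-neighbours $z_i^{u_j}$ with their subtrees must avoid $S$. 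After each commitment, $S$ grows by a bounded number of vertices.

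The central difficulty will be the in-neighbour $z_0^{u_j}$: it is a single fixed vertex in the given $H_{1,t'}$ at $u_j$, not a free choice, so we cannot just swap it out if it happens to lie in $S$. What rescues the argument is the bounded-out-degree hypothesis: any fixed $v \in Y \cap S$ can be $z_0^{u_j}$ for at most $k$ of the $t'$ out-leaves at $y$, since $v = z_0^{u_j}$ forces the edge $v \to u_j$ in $D$ and $v$ has at most $k$ out-neighbours there. Hence at most $k|S|$ of the out-leaves of $y$ are ``bad'' because of $z_0$, and since $|S|$ is always bounded by $|V(H_{s+1,t})|$ throughout the procedure (a constant depending only on $s$ and $t$), our choice of $t'$ leaves plenty of good candidates at every step. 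At the end, the embedded $H_{s,t}$-subgraph together with all the attached extensions will form the desired copy of $H_{s+1,t}$ with root in $X$.
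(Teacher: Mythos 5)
Your proposal is, at its core, the paper's own argument: the decisive observation --- that the only genuinely constrained vertex is the forced in-neighbour $z_0^{u_j}$ of each candidate out-leaf (the paper calls it $f(w)$), and that since every $y\in Y$ has at most $k$ out-edges, any fixed already-used vertex can be the forced in-neighbour of at most $k$ of the $t'$ out-leaf candidates at a given penultimate vertex --- is exactly the Claim at the heart of the paper's proof, and the rest of your construction is the same greedy, ``$t'$ is huge compared with $|V(H_{s+1,t})|$'' disjointness count used there.

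There is, however, one step that as literally written would fail. You initialize $S$ as the vertices of the embedded $H^*_{s,t,t'}$ at depth at most $2s-1$, so the in-leaves of that copy are \emph{not} in $S$; yet they are forced vertices of the final embedding, which you only add when their penultimate vertex is processed. A new vertex committed while processing an earlier penultimate vertex is only required to avoid $S$, so it may coincide with the in-leaf of a later one, destroying injectivity; similarly, you never require a chosen out-leaf $u_j$ itself to lie outside $S$. Both are repaired by the same counting you already use: put all forced skeleton vertices (in particular the boundedly many in-leaves) into the initial $S$, and count $u_j\in S$ among the bad out-leaf candidates. (You also gloss over attaching the $t$ in-neighbours of each committed $z_0^{u_j}$, but these are available among its $t'$ in-neighbours inside $L(u_j)$, exactly as in the paper's final step.) The paper avoids the collision issue structurally, by first fixing the entire $H_{s,t}$-subgraph $K'$ --- selecting the out-leaves so that the vertices $f(w)$ are new and pairwise distinct --- and only then extending greedily, so every new vertex is chosen to avoid everything already embedded. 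With these small adjustments your proof is correct and coincides with the paper's.
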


\begin{proof}
	Let $t'$ be very large.
	
	Take a subgraph $K$ in $D$ which is isomorphic to $H^*_{s,t,t'}$ with the root in $X$ and in which every leaf is the root of an $H_{1,t'}$. Choose also a subgraph $L(w)$ isomorphic to $H_{1,t'}$ with root $w$ for every leaf $w$ in $K$. Observe that for every leaf $w$ of $K$, $w$ has a unique in-neighbour in $L(w)$, call this vertex $f(w)$ and note that $f(w)\in Y$.
	
	\medskip
	
	\noindent \emph{Claim.} $K$ has a subgraph $K'$ isomorphic to $H_{s,t}$ with the same root as $K$ such that for every out-leaf $w$ in $K'$, the vertex $f(w)$ is not in $V(K')$, and all these $f(w)$'s are distinct.
	
	\medskip
	
	\noindent \emph{Proof of Claim.} To get a subgraph of $H^*_{s,t,t'}$ isomorphic to $H_{s,t}$, we need to keep $t$ of the $t'$ out-leaves for every vertex of the penultimate layer. We can do this one by one in an arbitrary order. We just need to pay attention that for each out-leaf $w$ that we keep, the vertex $f(w)$ should be different from every vertex that is already in $K'$, and moreover all the $f(w)$'s for different $w$'s should be different. This can be done since $t'$ is very large, but every $f(w)$ is in $Y$ and every $y\in Y$ has at most $k$ out-edges in $D$, so at any point the number of forbidden choices for $w$ is at most $2|V(H_{s,t})|\cdot k$. In particular, $t'>2|V(H_{s,t})|\cdot k$ suffices. $\Box$
	
	\medskip
	
	It remains to extend $K'$ to a copy of $H_{s+1,t}$. For this, we need to ``attach" a copy of $H_{1,t}$ to each out-leaf in $K'$, and we need to attach a copy of $H_{1,t}^-$ to each in-leaf in $K'$ in a way that all new vertices are distinct from each other and from the vertices of $K'$. 
	
	We begin by extending $K'$ by joining $f(w)$ to $w$ for each out-leaf $w$ of $K'$. By the above claim, all of these $f(w)$'s are distinct and disjoint from $K'$. 
	Next for every leaf $w$ of $K'$ we want to append $t$ vertex disjoint (apart from sharing the start vertex $w$) paths of length $2$ directed away from $w$. This can be done since $L(w)$ gives us $t'>|V(H_{s+1,t})|$ vertex disjoint paths of length $2$ starting at $w$, so no matter how many vertices we already embedded we still have one of these paths available.
	
	It remains to attach $t$ in-neighbours to each vertex in the penultimate layer of our partial $H_{s+1,t}$. Since the vertices in the penultimate layer are in the middle layer of some $L(w)$, we know that each of these vertices has at least $t'$ in-neighbours in $D$ so once again we will always have enough of them to complete the picture.
\end{proof}

\begin{proof}[ of Theorem \ref{thm:find Hst}]
	We will use induction on $s$. The case $s=1$ is Lemma \ref{lem:find H1t}. Assume now that we have verified the statement for $s-1$. Let $k,s,t$ be positive integers and let $t'=t'(k,s-1,t)$ from Lemma~\ref{lem:extend H}.
	
	Let $D$ be an $H_{s,t}$-free orientation of $G$ with the property that there are at most $k$ out-edges from every $y\in Y$. Let $X_1$ be the set of vertices in $X$ which are roots of a copy of $H_{1,t'}$ in $D$ and let $X_2=X\setminus X_1$. Clearly there are at most $2^n$ possibilities for $X_1$. The oriented graph $D\lbrack X_2\cup Y\rbrack$ contains no $H_{1,t'}$ with the root in $X_2$, so the number of possibilities for the orientation of $G\lbrack X_2\cup Y\rbrack$ is at most $2^{O_{k,t'}(n)}$ by Lemma~\ref{lem:find H1t}. Moreover, since $D$ is $H_{s,t}$-free, Lemma~\ref{lem:extend H} implies that $D\lbrack X_1\cup Y\rbrack$ contains no copy of $H^*_{s-1,t,t'}$, and hence also no copy of $H_{s-1,t'}$, with the root in $X_1$. Then by induction there are at most $2^{O_{k,s-1,t'}(n)}$ possibilities for the orientation of $G\lbrack X_1\cup Y\rbrack$. Combining our bounds, the result follows.
\end{proof}

Finally let us deduce the first part of \Cref{thm:trees}.

\begin{corollary}
    Let $k \ge 1,$ let $G$ be an $n$-vertex graph and let $H$ be a $k$-vertex  \aad forest. There are at most $2^{O_k(n)}$ different  $H$-free orientations of $G$.
\end{corollary}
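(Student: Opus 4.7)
The plan is to reduce the corollary to \Cref{thm:find Hst}. As noted preceding \Cref{thm:find Hst}, any $k$-vertex \aad forest $H$ is a subgraph of the universal tree $H_{k+1,k}$; hence every $H$-free orientation of $G$ is automatically $H_{k+1,k}$-free, and it suffices to bound the number of $H_{k+1,k}$-free orientations of an arbitrary $n$-vertex graph $G$.

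Given such an orientation $D$, I would form the bipartition $V(G)=X\sqcup Y$ with $Y:=\{v\in V(G):d^+_D(v)\leq k\}$. The bipartite restriction $D[X,Y]$ is $H_{k+1,k}$-free (inherited from $D$) and has at most $k$ out-edges from each $y\in Y$, so \Cref{thm:find Hst} with $s=k+1$ and $t=k$ gives at most $2^{O_k(n)}$ choices for $D[X,Y]$. Moreover, since each $y\in Y$ has out-degree at most $k$ in $D$, we have $|E(G[Y])|\leq k|Y|$, so only $2^{O_k(|Y|)}$ orientations of $G[Y]$ are compatible with the out-degree bound. The piece $D[X]$ is itself an $H$-free orientation of the smaller graph $G[X]$, on which one would induct.

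For the induction to make progress one needs $|Y|\geq 1$. This is guaranteed for $n$ at least some $N(k)$ by a greedy embedding argument: if every vertex had $d^+,d^-\geq k+1$ one could iteratively build a copy of $H_{k+1,k}$ in $D$ by successively choosing one in-neighbour and $k$ out-neighbours at even layers, and symmetrically at odd layers. In the complementary case $|Y|=\emptyset$, the in-degree set $Y':=\{v:d^-_D(v)\leq k\}$ is non-empty and one applies the symmetric argument to the reverse orientation $\bar D$, using that the reverse of $H_{k+1,k}$ is again \aad and universal, so that reversing establishes a bijection between $H$-free orientations of $G$ and $H^{\mathrm{rev}}$-free orientations of $G$.

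The hard part will be to combine these ingredients cleanly without accumulating a spurious factor of $2^n$ from summing over the candidate bipartitions. This is resolved by exploiting that $(X(D),Y(D))$ is uniquely determined by $D$, so that the decomposition $D\mapsto(D[X],D[Y],D[X,Y])$ is injective; the count is then bounded by the product of the individual bounds above, which combined with the inductive bound $2^{O_k(|X|)}$ on $H$-free orientations of $G[X]$ yields the desired $2^{O_k(n)}$ total.
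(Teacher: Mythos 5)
Your reduction to $H_{k+1,k}$ and the application of \Cref{thm:find Hst} to the cross-pair are fine, but the inductive scheme has a genuine gap: you induct on $n$, peeling off the set $Y$ of vertices of out-degree at most $k$, and the cost of one step is $2^{\Theta_k(n)}$ while the progress is only $|Y|$ vertices. Indeed, \Cref{thm:find Hst} applied to the pair $(X,Y)$ gives a factor $2^{O_k(|X|+|Y|)}=2^{O_k(n)}$, not $2^{O_k(|Y|)}$, and the injectivity of $D\mapsto(D[X],D[Y],D[X,Y])$ does not remove the need to sum over the candidate sets $X$ (the count is $\sum_X$ of the product of the conditional bounds), which costs another $2^{n}$. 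Since all you can guarantee is $|Y|\ge 1$ — and in fact $|Y|$ can genuinely be $O_k(1)$: orient $K_{n-k-1,k+1}$ entirely towards the small side, so every large-side vertex has out-degree $k+1$, and this orientation is $H$-free whenever $H$ is not antidirected (e.g.\ $H$ a directed path of length two) — the recursion you obtain is of the form $g(n)\le 2^{\Theta_k(n)}\,g(n-1)$ in the worst case. Iterating it up to $\Omega(n)$ times yields only $g(n)\le 2^{O_k(n^2)}$; the implied constant cannot be kept uniform in $n$, so the induction does not close. (A secondary flaw: minimum in- and out-degree $k+1$ does not force a copy of $H_{k+1,k}$ — take disjoint regular tournaments on $2k+3$ vertices — because $H_{k+1,k}$ has far more than $k+2$ vertices, so the greedy embedding you invoke collides with already used vertices.)

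The missing idea, and the way the paper closes the argument, is to induct on the forest $H$ rather than on $n$. Remove an in-leaf from $H$ to get a $(k-1)$-vertex forest $H'$, and let $X$ be the vertices of out-degree at least $k$ in $D$. Then $D[X]$ must be $H'$-free, not merely $H$-free: a copy of $H'$ inside $X$ would extend to a copy of $H$ in $D$ using a spare out-neighbour of the attachment vertex, since that vertex has out-degree at least $k$ and $H$ has only $k$ vertices. Thus the induction hypothesis for the strictly smaller forest applies to $G[X]$, the edges inside $Y$ number at most $k|Y|$, and \Cref{thm:find Hst} (with $H\subseteq H_{k+1,k}$) handles the cross edges, exactly as in your decomposition. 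Because the recursion now has depth at most $k=O_k(1)$, the per-level overhead $2^{n}\cdot 2^{kn}\cdot 2^{O_k(n)}$ is harmless and the total stays $2^{O_k(n)}$. Without descending to a smaller forbidden forest on $X$, your count does not.
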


\begin{proof}
    The proof is by induction on $k$. For the base case note that the $k\le 2$ case is trivial. Let us now assume that the statement holds for forests with $k-1$ vertices. We may w.l.o.g.\ assume that $H$ has an in-leaf. Let $H'$ denote the oriented forest obtained from $H$ with this leaf removed. 
    Given an orientation $D$ of $G$, let $X$ be the subset of $V(G)$ consisting of vertices with out-degree at least $k$. There are $2^n$ different options for $X$. Let $Y=V(G) \setminus X$. Observe first that if we could find $H'$ inside $X$ then we could extend it to a copy of $H$ in $D$ since every vertex of $X$ has out-degree at least $k$. This means that by induction there are at most $2^{O_{k-1}(n)}$ many ways to orient the edges inside $X$. There are at most $|Y|k \le kn$ edges inside $Y$, so the edges inside $Y$ can be oriented in at most $2^{kn}$ many ways. Finally, the number of ways to orient the edges between $X,Y$ in a way that any vertex in $Y$ has at most $k$ out-edges and without creating a copy of $H_{k+1,k}$ is at most $2^{O_k(n)}$ by \Cref{thm:find Hst}. Since $H \subseteq H_{k+1,k}$, this completes the proof.
\end{proof}

\subsection{Odd cycles}

We will assume some familiarity with the basic directed regularity lemma, the specific details needed are given in Section 2 of \cite{alon-yuster}. Since the use of regularity in our argument is essentially the same as in both \cite{alon-yuster,ABKS}, we will not go into more technical details of these parts and will refer the reader to either of these papers, with the goal of making the key part of the argument easier to follow.

The following lemma says that if there are many orientations of $G$ which are $C_{2k+1}$-free then $G$ is not far from being bipartite. It is analogous to Lemma 2.1 in \cite{alon-yuster} which replaces $C_{2k+1}$ with an arbitrary tournament.

\begin{lemma}\label{lem:stability}
    Let $k \ge 1$ and $\delta>0$ there exists $n_0=n_0(\delta,k)$ such that if $G$ is a graph of order $n \ge n_0$ which has at least $2^{\floor{n^2/4}}$ distinct $C_{2k+1}$-free orientations then there is a bipartition of $V(G)$ with at most $\delta n^2$ edges inside parts.
\end{lemma}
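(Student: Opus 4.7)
The plan is to argue by contradiction, following the regularity-based strategy of Alon--Yuster \cite{alon-yuster} and Alon--Balogh--Keevash--Sudakov \cite{ABKS}, whose use the authors reference. Assume every bipartition of $V(G)$ leaves more than $\delta n^2$ intra-part edges; I will show this forces $D(G,C_{2k+1}) < 2^{\lfloor n^2/4 \rfloor}$.

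First, fix auxiliary parameters $\varepsilon \ll \gamma \ll \delta$ and apply Szemer\'edi's regularity lemma to $G$, obtaining an equitable $\varepsilon$-regular partition $V_1,\dots,V_m$. Let $R$ be the reduced graph on $[m]$: put $ij\in E(R)$ whenever $(V_i,V_j)$ is $\varepsilon$-regular with density at least $2\gamma$. A routine cleaning shows bipartitions of $R$ lift to bipartitions of $V(G)$ up to $O((\varepsilon+\gamma)n^2)$ misclassified edges, so $R$ is $(\delta - O(\varepsilon+\gamma))$-far from bipartite. Combining the Erd\H os--Simonovits stability theorem with a standard supersaturation bootstrap gives that $R$ contains $\Omega_{\delta,k}(m^{2k+1})$ copies of $C_{2k+1}$. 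Next, for each $C_{2k+1}$-free orientation $D$ of $G$, build the directed reduced digraph $R_D$: place $i\to j$ iff $(V_i,V_j)$ is $\varepsilon$-regular in the directed sense and at least a $\gamma$-fraction of $|V_i||V_j|$ ordered pairs is a $D$-edge oriented from $V_i$ to $V_j$. The directed counting lemma forces $R_D$ to contain no directed $C_{2k+1}$, as otherwise $D$ would contain $\Omega((\gamma n/m)^{2k+1})$ directed copies. Thus on each of the many $C_{2k+1}$s of $R$, at least one pair must be ``one-sided'' in $D$: writing $p_{ij}\in[0,1]$ for the fraction of $G$-edges between $V_i,V_j$ oriented $V_i\to V_j$, this $p_{ij}$ must lie outside $[\gamma/d_{ij},\,1-\gamma/d_{ij}]$.

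To count the $C_{2k+1}$-free orientations, I group them by a discretization of the vector $(p_{ij})_{ij\in E(R)}$ to a fine enough grid, giving $2^{o(n^2)}$ groups. Within a group the number of orientations is at most $2^{\sum_{ij} H(p_{ij})d_{ij}t^2 + O((\varepsilon+\gamma)n^2)}$ with $t\approx n/m$ and $H$ the binary entropy, since each $ij\in E(R)$ contributes at most $\binom{d_{ij}t^2}{p_{ij}d_{ij}t^2}\le 2^{H(p_{ij})d_{ij}t^2}$ choices. Every copy of $C_{2k+1}$ in $R$ imposes a one-sided constraint that drops one of its $H(p_{ij})$ from close to $1$ to close to $0$; extracting a near-matching of $\Omega_{\delta,k}(m^2)$ edge-disjoint constrained pairs from the supersaturated family of cycles (feasible because each pair of $R$ lies in at most $O(m^{2k-1})$ copies of $C_{2k+1}$) and summing the savings yields a total bound of at most $2^{|E(G)| - c(\delta,k)n^2}$ orientations, which is strictly less than $2^{\lfloor n^2/4 \rfloor}$ for $n$ large, giving the contradiction.

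The main obstacle is the final quantitative entropy bookkeeping: each one-sided pair saves at most $d_{ij}t^2$ in the exponent, which for densities just above $2\gamma$ is only $\Theta(\gamma t^2)$ per pair, so one needs either $\gamma$ chosen large compared to the supersaturation constant $\kappa_{\delta,k}$, or a more refined analysis distinguishing high- and low-density regular pairs. The choices of $\varepsilon \ll \gamma \ll \delta$ must be made consistently so that (i) $R$ retains the far-from-bipartite property, (ii) supersaturation supplies enough copies of $C_{2k+1}$, and (iii) the aggregated entropy saving exceeds $|E(G)|-\lfloor n^2/4\rfloor\le n^2/4$. Apart from this balance, the proof is essentially the framework of \cite{alon-yuster,ABKS}, with the condition ``no directed $C_{2k+1}$ in $R_D$'' replacing ``no copy of the given tournament $T$'' in Alon--Yuster's argument.
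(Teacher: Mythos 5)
Your central counting step does not close, and the difficulty you flag at the end is not a technicality but a fatal weakness of the constraint you extract. In the contrapositive you need, for every $G$ that is $\delta n^2$-far from bipartite, a bound strictly below $2^{\lfloor n^2/4\rfloor}$; since the required saving is $e(G)-n^2/4+\Omega(n^2)$, for dense $G$ (say $G=K_n$, which is $\Omega(n^2)$-far from bipartite) you must save more than $n^2/4$. But the only consequence of $C_{2k+1}$-freeness you use is that every undirected copy of $C_{2k+1}$ in $R$ contains a one-sided pair, and this is compatible with a high-entropy configuration: let the two-sided (bidirected) pairs be exactly those between two halves of the clusters, and let every pair inside a half be dense but one-sided. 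Every odd cycle of $R$ meets a pair inside a half (an odd cycle is not $2$-colourable), so your constraint is satisfied, yet your entropy bound for this group is about $2^{(m^2/4)t^2 + H(\gamma)(m^2/4)t^2}=2^{(1+H(\gamma))n^2/4}>2^{n^2/4}$: the $\sim m^2/4$ bidirected pairs already contribute the full $n^2/4$, and each one-sided dense pair still costs $H(\gamma)t^2$ rather than $0$. No choice of $\varepsilon\ll\gamma\ll\delta$ fixes this. The missing idea is the paper's: a single directed cluster edge in a triangle with two bidirected cluster edges already forces a directed $C_{2k+1}$ (refine each cluster into $k$ parts and apply the standard embedding lemma), so the bidirected cluster graph of any $C_{2k+1}$-free orientation is triangle-free and, moreover, dense one-sided pairs inside the stability partition are rare. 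The paper organizes the counting around ``bidirected versus not'' (every non-bidirected pair costs only $2^{c_\eta t^2}$, irrespective of its $G$-density), concludes that some orientation has at least $(1/4-\beta)m^2$ bidirected cluster edges, and then gets the bipartition of $G$ from that single orientation via Simonovits stability plus a Mantel-type argument bounding directed cluster edges inside the parts.

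Two further steps are wrong or unjustified as written. First, ``$R$ far from bipartite implies $\Omega_{\delta,k}(m^{2k+1})$ copies of $C_{2k+1}$'' is false: a balanced blow-up of $C_{2k+3}$ is $\Omega(n^2)$-far from bipartite but has odd girth $2k+3$, hence no $C_{2k+1}$ at all. You must also feed in density --- $e(G)\ge\lfloor n^2/4\rfloor$ does follow from the hypothesis since the number of orientations is at most $2^{e(G)}$ --- and then invoke stability/supersaturation for $C_{2k+1}$; so this is patchable, but as stated it is an error. Second, you regularize $G$ once (undirected) and then apply a directed counting lemma to $R_D$; however, $\varepsilon$-regularity of $(V_i,V_j)$ in $G$ does not give any regularity of the orientation between them (half of $V_i$ may direct all its edges forward and the other half backward), so either directed-irregular pairs are dropped from $R_D$ in an uncontrolled way (which breaks the claim that every copy of $C_{2k+1}$ in $R$ contains a one-sided pair) or the counting lemma is inapplicable. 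The paper avoids this by applying the directed regularity lemma to each orientation separately and union-bounding over the at most $M^n2^{O(M^2)}$ possible partitions and cluster graphs.
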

    
\begin{proof}
Let us fix $0<\eps \ll \eta \ll \beta \ll \alpha \ll \delta$ as needed for various points of the upcoming argument. 

Let $\overrightarrow{G}$ be a $C_{2k+1}$-free orientation of $G$. We apply the directed regularity lemma to $\overrightarrow{G}$ to obtain an $\eps$-regular partition $V(\overrightarrow{G})=V_1 \cup \ldots \cup V_m$ (all $V_i$'s should have sizes as equal as possible, and all but $\eps m^2$ pairs $(V_i, V_j)$ should satisfy that linear sized subsets have about the same density of edges in both directions as the density between $V_i,V_j$). We then consider a cluster (di)graph $C$ of density $\eta$ (its vertices are the parts of our partition and two parts are joined by a directed edge if they are $\eps$-regular and the density of edges in the corresponding direction is at least $\eta$).

We first want to show that there exists some orientation $\overrightarrow{G}$ for which the resulting cluster graph has at least $m^2/4-\beta m^2$ edges directed both ways. We claim that if this is not the case, then there would be too few (less than $2^{\lfloor n^2/4\rfloor}$) orientations possible. 
Since the regularity lemma guarantees that $m \le M=M(\eps)$, there are at most $M^n$ choices for $\mathcal{P},$ at most $2^{\binom{M}{2}}$ choices for which pairs are $\eps$-regular and $4^{\binom{M}{2}}$ choices for $C$. In total there are at most $M^n2^{3M^2/2}$ choices for $\mathcal{P},$ regular pairs and $C$. Let us now bound how many orientations could give rise to a fixed choice. There are few edges inside parts of our fixed $\mathcal{P}$ and between non-$\eps$-regular pairs (at most $\eps n^2$ in both cases) and each edge may be oriented in two ways, so the total contribution of these edges to the number of orientations is at most a factor of $2^{2\eps n^2}$. For any $\eps$-regular pair $(V_i,V_j)$ which is not an edge of $C$ in one of the directions, there are at most about $\eta n^2/m^2$ directed edges in that direction. An easy estimate tells us that the edges between $V_i$ and $V_j$ can be oriented like this in at most $2^{c_{\eta}n^2/m^2}$ many ways where $c_{\eta} \to 0$ as $\eta \to 0$ and $c_{\eta}$ only depends on $\eta$. Since there are at most $m^2$ such pairs $(V_i,V_j)$, orienting edges between them contributes at most a factor of $2^{c_{\eta}n^2}$ to the total number of orientations. Finally, for any edge of $C$ directed both ways, there are at most $2^{(n/m)^2}$ orientations of the edges between the corresponding pair of parts, but since we are assuming that $C$ has at most $m^2/4-\beta m^2$ such edges, they contribute at most a factor of $2^{n^2/4-\beta n^2}$ to the total number of orientations. Putting it all together we get at most
$$M^n2^{3M^2/2}\cdot 2^{2\eps n^2} \cdot 2^{c_{\eta}n^2} \cdot 2^{n^2/4-\beta n^2}$$
orientations. Choosing $\eta$ to be small enough compared to $\beta$ gives us a contradiction to having at least $2^{\floor{n^2/4}}$ orientations.

Let now $\overrightarrow{G}$ be an orientation for which the resulting cluster graph $C$ has at least $m^2/4-\beta m^2$ edges directed both ways. We claim that $C$ can not contain a bidirected triangle missing only a single directed edge\footnote{In fact even having an oriented triangle with one bidirected edge would suffice, but this does not seem to be more useful.} as otherwise $\overrightarrow{G}$ would contain a $C_{2k+1}$. This is a consequence of a standard embedding lemma. One can easily deduce it from the classical (undirected) embedding lemma (see e.g. Lemma 2.1 in \cite{regularity}) by first refining the partition (splitting each part into $k$ parts of size as equal as possible, which preserves the regularity while density drops to at worst $\eta-\eps\ge \eta/2$) then only keeping edges in the desired direction, and applying the usual embedding lemma; see \Cref{fig:refinement} for an illustration.

\begin{figure}
    \centering
    \begin{tikzpicture}
        \node (a) at (-12,0) {};
	    \node (b) at (-11,1.73) {};
	    \node (c) at (-10,0) {};
	    \node at (-11,-1.4) {In cluster graph $C$};
	\foreach \i in {a,b,c}
	{	
		\draw (\i) \bvx;
	}
	\draw[bidiredge, line width = 1 pt] (a) -- (c);
	\draw[bidiredge, line width = 1 pt] (c) -- (a);
	\draw[diredge, line width = 1 pt] (a) -- (b);
	\draw[diredge, line width = 1 pt] (b) -- (c);

	\node (a1) at (-7,1) {};
	\node (a2) at (-6,-1) {};
	
	\node (b1) at (-4,2) {};
	\node (b2) at (-6,2) {};
	
	\node (c1) at (-4,-1) {};
	\node (c2) at (-3,1) {};
	\node at (-5,-1.4) {The refined cluster graph};

	\draw[] (-6.7,-0.1)--(-6.3,0.1);
	\draw[] (-3.7,0.1)--(-3.3,-0.1);
	
	\draw[bidiredge, line width = 1 pt] ($0.75*(a1)+0.25*(a2)$) -- ($0.75*(c1)+0.25*(c2)$);
	\draw[bidiredge, line width = 1 pt] ($0.75*(c1)+0.25*(c2)$) -- ($0.75*(a1)+0.25*(a2)$);
	\draw[bidiredge, line width = 1 pt] ($0.75*(c2)+0.25*(c1)$) -- ($0.75*(a1)+0.25*(a2)$);
	\draw[bidiredge, line width = 1 pt] ($0.75*(a1)+0.25*(a2)$) -- ($0.75*(c2)+0.25*(c1)$);
	\draw[bidiredge, line width = 1 pt] ($0.75*(c1)+0.25*(c2)$) -- ($0.75*(a2)+0.25*(a1)$);
	\draw[bidiredge, line width = 1 pt] ($0.75*(a2)+0.25*(a1)$) -- ($0.75*(c1)+0.25*(c2)$);
	\draw[bidiredge, line width = 1 pt] ($0.75*(a2)+0.25*(a1)$) -- ($0.75*(c2)+0.25*(c1)$);
	\draw[bidiredge, line width = 1 pt] ($0.75*(c2)+0.25*(c1)$) -- ($0.75*(a2)+0.25*(a1)$);
	
	\draw[bidiredge, line width = 1 pt] ($0.75*(a1)+0.25*(a2)$) -- ($0.5*(b1)+0.5*(b2)$);
	\draw[bidiredge, line width = 1 pt] ($0.75*(a2)+0.25*(a1)$) -- ($0.5*(b1)+0.5*(b2)$);
	
	\draw[line width = 1 pt, postaction={decorate,decoration={markings,
		mark=at position .4 with {\arrow[scale = 1.5]{stealth};}}}] ($0.5*(b1)+0.5*(b2)$) -- ($0.75*(c1)+0.25*(c2)$) ;
	\draw[line width = 1 pt, postaction={decorate,decoration={markings,
		mark=at position .45 with {\arrow[scale = 1.5]{stealth};}}}] ($0.5*(b1)+0.5*(b2)$) -- ($0.75*(c2)+0.25*(c1)$) ;
	
	\fitellipsis{a1}{a2};
	\fitellipsis{b1}{b2};
	\fitellipsis{c1}{c2};
	
	\draw[] (-6.7,-0.1)--(-6.3,0.1);
	\draw[] (-3.7,0.1)--(-3.3,-0.1);
	
		\node (a1) at (0,1) {};
	\node (a2) at (1,-1) {};
	
	\node (b1) at (3,2) {};
	\node (b2) at (1,2) {};
	
	\node (c1) at (3,-1) {};
	\node (c2) at (4,1) {};
	\node at (2,-1.4) {The $C_5$ we find.};
	
	\fitellipsis{a1}{a2};
	\fitellipsis{b1}{b2};
	\fitellipsis{c1}{c2};
	
	\draw[] (0.3,-0.1)--(0.7,0.1);
	\draw[] (3.3,0.1)--(3.7,-0.1);

	\draw[bidiredge, line width = 1 pt] ($0.75*(c1)+0.25*(c2)$) -- ($0.75*(a1)+0.25*(a2)$);

	\draw[diredge, line width = 1 pt] ($0.75*(a2)+0.25*(a1)$) -- ($0.75*(c1)+0.25*(c2)$);
	
	\draw[line width = 1 pt, postaction={decorate,decoration={markings,
		mark=at position .45 with {\arrow[scale = 1.5]{stealth};}}}] ($0.75*(c2)+0.25*(c1)$) -- ($0.75*(a2)+0.25*(a1)$);
	
	\draw[bidiredge, line width = 1 pt] ($0.75*(a1)+0.25*(a2)$) -- ($0.5*(b1)+0.5*(b2)$);

	\draw[line width = 1 pt, postaction={decorate,decoration={markings,
		mark=at position .45 with {\arrow[scale = 1.5]{stealth};}}}] ($0.5*(b1)+0.5*(b2)$) -- ($0.75*(c2)+0.25*(c1)$) ;
	
	\draw ($0.5*(b1)+0.5*(b2)$) \smvx;
	\draw ($0.75*(c2)+0.25*(c1)$) \smvx;
	\draw ($0.75*(a1)+0.25*(a2)$) \smvx;
	\draw ($0.25*(c2)+0.75*(c1)$) \smvx;
	\draw ($0.25*(a1)+0.75*(a2)$) \smvx;

    \end{tikzpicture}
    \caption{Illustration of how we find $C_5$.}
    \label{fig:refinement}
\end{figure}
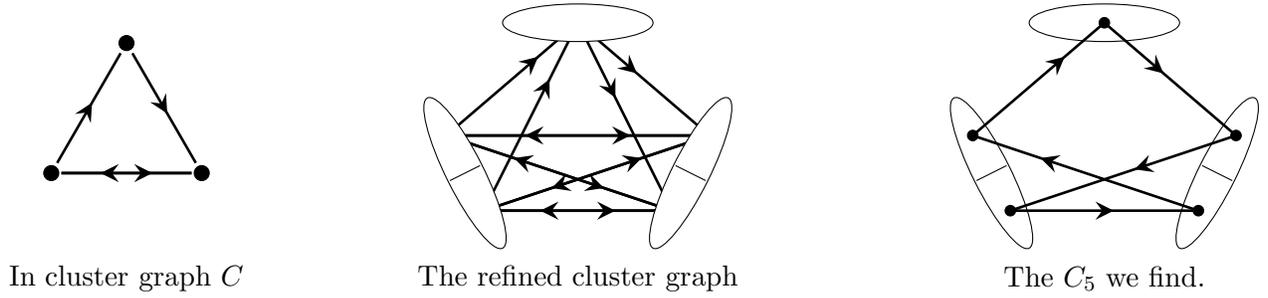

In particular, this tells us that the graph consisting only of the bidirected edges of $C$ is both triangle-free and has at least $m^2/4-\beta m^2$ edges. The stability theorem of Simonovits \cite{simonovits-stability} shows that there is a bipartition $V(C)=W_1 \cup W_2$ with at most $\alpha m^2$ bidirected edges within a part (using that $\alpha \gg \beta$). Hence, the bipartite subgraph consisting of the bidirected edges of $C$ between $W_1$ and $W_2$ has at least $m^2/4-(\beta+\alpha) m^2$ edges. If $C$ had in addition more than $8(\alpha+\beta)m^2$ directed edges inside parts, we would find a bidirected triangle with one directed edge removed in $C$. Indeed, more than $4(\alpha+\beta)m^2$ of these additional edges must be inside a single part, say $W_1$, and we can pass to a bipartite subgraph of size more than $2(\alpha+\beta)m^2$ within $W_1$. Taking into account these edges might also be bidirected there are more than $(\alpha+\beta)m^2$ \emph{distinct} pairs spanning a directed edge. These edges together with the bidirected edges between $W_1$ and $W_2$ make a subgraph of $C$ with more than $m^2/4$ edges, so by Mantel's theorem they give a triangle. This triangle has at most one edge inside $W_1$ (since the edges we used inside $W_1$ form a bipartite graph), so it has at least two bidirected edges, as desired.

It follows from the above that there are at most $\alpha m^2+8(\alpha+\beta)m^2$ edges of $C$ inside $W_1$ and $W_2$. Remove all edges of $G$ which correspond to such edges of $C$. Moreover, remove all edges within $V_i$'s and between pairs $(V_i,V_j)$ corresponding to non-edges in $C$. The remaining subgraph of $G$ is bipartite (with the parts being the union of $V_i$'s corresponding to $W_1$ and to $W_2$). Since there are at most $\eps n^2$ edges within $V_i$'s, at most $\eps n^2$ edges between non-$\eps$-regular pairs and at most $2\eta n^2$ edges between $\eps$-regular pairs which are non-edges in $C$, we have in total removed at most $(\alpha+8(\alpha+\beta))n^2+\eps n^2+\eps n^2+2\eta n^2\leq \delta n^2$ edges, as desired.
\end{proof}

The following lemma replaces the embedding Lemma 3.1 of \cite{alon-yuster}. Let us introduce some notation for convenience.
Given an oriented graph $D$ and an integer $k$, we say that a pair of disjoint subsets $W_1,W_2\subseteq V(D)$ with $|W_i|\ge 2k$ is $k$-regular if for any $X_i \subseteq W_i, |X_i|\ge |W_i|/20$, $D$ has at least $\frac{1}{10}|X_1||X_2|$ edges from $X_1$ to $X_2$, as well as at least $\frac{1}{10}|X_1||X_2|$ edges from $X_2$ to $X_1$.

\begin{lemma}\label{lem:embedding}
    Let $D$ be an oriented graph and let $W_1,W_2\subseteq V(D)$ be a $k$-regular pair. Then one can find a directed path of length $2k$ in the bipartite oriented graph $D[W_1,W_2]$ starting and ending in $W_1$.
\end{lemma}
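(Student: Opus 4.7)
The plan is to build the path $v_0 \to v_1 \to \cdots \to v_{2k}$ greedily, using $k$-regularity to ensure at each step that a next vertex satisfying the edge-direction and vertex-disjointness requirements can be chosen.

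The first step is to identify well-behaved subsets. A standard averaging argument with the $k$-regularity condition shows that at most $|W_i|/20$ vertices of $W_i$ can have fewer than $|W_j|/10$ out-neighbors (or in-neighbors) in $W_j$: if more than $|W_i|/20$ did, regularity applied to this ``bad'' subset versus $W_j$ would force too many edges from it, contradicting the low-degree assumption. Iterating this kind of degree cleaning carefully (applying the same reasoning to the pair $(W_i, Q_j)$ in place of $(W_i, W_j)$) then produces subsets $Q_1 \subseteq W_1$, $Q_2 \subseteq W_2$ of size at least $|W_i|/2$, with the property that every $v \in Q_i$ has at least $|Q_j|/20$ in- and out-neighbors inside $Q_j$ itself.

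Next, starting with any $v_0 \in Q_1$, I construct the path step by step. Given $v_0, \ldots, v_{i-1}$ with $v_{i-1} \in Q_{\text{side}(i-1)}$, the set of valid choices for $v_i$ is $\bigl(N^+(v_{i-1}) \cap Q_{\text{side}(i)}\bigr) \setminus \{v_0, \ldots, v_{i-1}\}$. By construction the intersection has size at least $|Q_{\text{side}(i)}|/20 \ge |W_{\text{side}(i)}|/40$, so excluding the at most $2k$ previously-chosen vertices still leaves a nonempty set as long as $|W_{\text{side}(i)}|$ is sufficiently large compared to $k$. After $2k$ steps this yields the required path of length $2k$, and since $2k$ is even, both endpoints lie in $W_1$.

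The main obstacle is the cleaning step: a single-pass degree cleaning only guarantees high min-degree into $W_j$, but for the greedy argument we need min-degree into the smaller set $Q_j$. The iterated refinement resolves this, but one must verify that the iteration stabilises after a bounded number of rounds without eroding $|Q_i|$ too much; here the regularity condition controls the number of vertices removed per round by at most $|W_i|/10$, so a constant number of rounds suffices. The hypothesis $|W_i| \ge 2k$ is slightly weaker than the $|W_i| \ge \Omega(k)$ that the greedy step literally demands, but in the application to Lemma \ref{lem:stability} the parts of the regularity-lemma partition have size of order $n/m$ with $n$ enormous and $k, m$ bounded, so this is never an issue.
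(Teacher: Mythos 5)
The step that fails is the cleaning step, and it fails for a concrete numerical reason. With the constants in the definition of a $k$-regular pair, one round of cleaning removes up to $2\cdot|W_j|/20=|W_j|/10$ vertices from $W_j$ (those with few out-neighbours plus those with few in-neighbours), while the degree guarantee you gain for the surviving vertices is only $|W_j|/10$ into the \emph{full} set $W_j$; hence the min-degree into $Q_j$ that one round yields is $|W_j|/10-|W_j|/10=0$. The loss exactly cancels the gain, which is why you iterate — but your justification of the iteration ("at most $|W_i|/10$ vertices removed per round, so a constant number of rounds suffices") is a non sequitur: a per-round bound on the removal says nothing about how many rounds are needed before the process stabilises, and nothing prevents it from shedding a few vertices every round for linearly many rounds, eroding the sets below $|W_i|/20$, at which point $k$-regularity can no longer be applied to the bad sets and the argument collapses. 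You also cannot bound the \emph{total} number of removed vertices by a single application of regularity, since vertices removed in different rounds have small degree into different nested, shrinking sets with different thresholds, so they do not form one bad set to which the definition applies. So the existence of $Q_1,Q_2$ with $|Q_i|\ge|W_i|/2$ and mutual min-degree at least $|Q_j|/20$ is not established, and with these exact constants I do not see how to establish it by degree-cleaning at all.

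There is a second, smaller mismatch which you already flag: even granting the cleaning, your greedy step needs $|Q_j|/20$ to exceed the number of previously used vertices, i.e.\ $|W_j|=\Omega(k)$ with a constant around $80$, whereas the lemma assumes only $|W_i|\ge 2k$; deferring to the application proves a weaker statement than the one claimed (it would suffice for the paper's use, but it is not Lemma~\ref{lem:embedding}). The paper's proof sidesteps both issues by never fixing a cleaned pair: at each step it applies the regularity condition afresh to the pair consisting of the out-neighbourhood $V_{2i-1}$ of the current endpoint (of size at least $|W_2|/20$) and the as-yet-unused part of the other side (of size at least $|W_1|-k\ge |W_1|/2$), and picks the next vertex by averaging, so that it again has at least $(|W_1|-k)/10\ge |W_1|/20$ usable out-neighbours; this needs exactly $|W_i|\ge 2k$ and no min-degree sets. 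If you want to salvage your write-up, replace the cleaning by this ``re-apply regularity to the current neighbourhood'' scheme.
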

    
\begin{proof}
    We iteratively find our directed path. Assume that for some $i\leq k$, we have found a path $v_1 v_2 \ldots v_{2i-1}$ and a subset $V_{2i-1}\subseteq W_2 \setminus \{v_2,v_4,\ldots,v_{2i-2}\}$ of at least $|W_2|/20$ out-neighbours of $v_{2i-1}$. Then since $D$ has at least $\frac{1}{10}|V_{2i-1}||W_1\setminus \{v_1,v_3,\ldots,v_{2i-1}\}|$ edges oriented from $V_{2i-1}$ to $W_1 \setminus \{v_1,v_3,\ldots,v_{2i-1}\}$, there must be a vertex $v_{2i}$ in $V_{2i-1}$ with a set $V_{2i}$ of at least $(|W_1|-k)/10\ge |W_1|/20$ out-neighbours in $W_1 \setminus\{v_1,v_3,\ldots,v_{2i-1}\}$. Repeating from the other side completes the iteration. After $k$ iterations, we find the desired path. 
\end{proof}

We now turn to the proof of the main result in this section.

\begin{proof}[ of \Cref{thm:main}]
    Let $n_0=n_0(\delta^2, k)$ be given by \Cref{lem:stability} applied with $\delta^2$ in place of $\delta$, for some sufficiently small $\delta$.
    
    Let us take a graph $G$ on $n>n_0^2+n_0$ vertices which has at least $2^{\floor{n^2/4}+m}$ $C_{2k+1}$-free orientations for some $m \ge 0$. We will show that if $G$ is not the Tur\'an graph, then either we can find a vertex $v$ such that $G\setminus v$  has at least $2^{\floor{(n-1)^2/4}+m+1}$ $C_{2k+1}$-free orientations or we can find distinct vertices $u$ and $v$ such that $G\setminus \{u,v\}$ has at least $2^{\floor{(n-2)^2/4}+m+2}$ $C_{2k+1}$-free orientations. We then iterate (note that no subgraph we consider can any longer be a Tur\'an graph since it has too many orientations, so also edges) as long as our graph has at least $n_0$ vertices. When we stop, we obtain a graph with less than $n_0$ vertices which has at least $2^{n_0^2}$ orientations, which is impossible, since it has at most $n_0^2/2$ edges.
    
    Let us assume that $G$ is not the Tur\'an graph on $n \ge n_0$ vertices and proceed to find a suitable vertex $v$.
    
    Every vertex needs to have degree at least $\floor{n/2}$ as otherwise its edges contribute at most a factor of $2^{\floor{n/2}-1}$ to the number of orientations so it would immediately work as our vertex $v$ above.
    
    Let $V_1,V_2$ form a bipartition of $V(G)$ which minimises the number of edges within parts. 
    Since $n \ge n_0$, by \Cref{lem:stability} we have at most $\delta^2 n^2$ edges within parts. This implies $|V_1|,|V_2| \le (1/2+\delta)n$ as otherwise $G$ would have less than $n^2/4$ edges, so too few orientations. Similarly, there can be at most $\delta^2 n^2$ edges missing between parts.
    
    We first claim that there can be only few orientations for which there exists a pair of disjoint subsets  $X_1\subseteq V_1, X_2\subseteq V_2$, both of size at least $2\delta n$, which have at most $|X_1||X_2|/10$ edges directed from $X_1$ to $X_2$. The number of such orientations of edges between $X_1,X_2$ is at most 
    $$\sum_{i=0}^{|X_1||X_2|/10}\binom{e(X_1,X_2)}{i} \le \sum_{i=0}^{|X_1||X_2|/10}\binom{|X_1||X_2|}{i}\le 2^{|X_1||X_2|/2},$$
    where $e(X_1,X_2)$ stands for the number of edges between $X_1$ and $X_2$. Since the total number of edges is at most $n^2/4+\delta^2 n^2$, there are at most $2^{n^2/4+\delta^2 n^2-|X_1||X_2|/2} \le 2^{n^2/4-\delta^2 n^2}$ such orientations of the whole graph.
    Since we can choose $X_1$ and $X_2$ in at most $2^{2n}$ many ways, there can be at most $2^{2n}\cdot 2^{n^2/4-\delta^2 n^2}\le 2^{\floor{n^2/4}-1}$ orientations for which such a pair $X_1,X_2$ exists.
        
    Let us now consider only $C_{2k+1}$-free orientations such that for any pair of disjoint subsets $X_1,X_2$ of size at least $2\delta n$, there are at least $|X_1||X_2|/10$ edges oriented from $X_1$ to $X_2$ and also from $X_2$ to $X_1$. Then any pair of subsets, both of size at least $40\delta n$, is $k$-regular. We call such an orientation \emph{relevant} and by the above counting and our assumption on the number of $C_{2k+1}$-free orientations of $G$, there are at least $2^{\floor{n^2/4}+m}-2^{\floor{n^2/4}-1}\ge 2^{\floor{n^2/4}+m-1}$ relevant orientations.
    
    \textbf{Case 1.} Some vertex $v$ has at least $800\delta n$ neighbours in its own part, say $V_1$.
    
    Note that $v$ must have at least $800 \delta n$ neighbours in $V_2$ as well, by maximality of the number of edges between $V_1$ and $V_2$. If in a relevant orientation $v$ has at least $40\delta n$ out-neighbours and at least $40\delta n$ in-neighbours belonging to different parts, then since these sets make a $k$-regular pair we can use Lemma~\ref{lem:embedding} to find a path of length $2k-1$ and join it with $v$ to give a $C_{2k+1}$, a contradiction. This means that at least $2$ out of the $4$ sets: out-neighbours of $v$ in $V_1$, in-neighbours of $v$ in $V_1$, out-neighbours of $v$ in $V_2$ and in-neighbours of $v$ in $V_2$ need to have size at most $40\delta n.$ These two parts can not belong to the same $V_i$ or be of different types in different parts. The only remaining option is for $v$ to have at most $80\delta n$ in-neighbours or at most $80\delta n$ out-neighbours. In particular, its edges may be oriented in such a way in at most 
    $$2 \sum_{i=0}^{80 \delta n} \binom{d(v)}{ i}\le 2 \sum_{i=0}^{d(v)/10} \binom{d(v)}{ i} \le 2^{0.49d(v)} \le 2^{0.49n}$$ 
    many ways, where $d(v)\le n$ denotes the degree of $v$ in $G$. In other words $G \setminus \{v\}$ must have at least $$2^{\floor{n^2/4}+m-1-0.49n}\ge 2^{\floor{(n-1)^2/4}+m+1}$$
    $C_{2k+1}$-free orientations, as desired.
    
    \textbf{Case 2.} Every vertex of $G$ has at most $800\delta n$ neighbours in its own part.
    
    Since $G$ is not the Tur\'an graph there must exist an edge $uv$ inside a part, say in $V_2$. Both $u$ and $v$ have at least $\floor{n/2}-800\delta n \ge n/3$ neighbours in $V_1$, in particular they have $d(u,v) \ge n/8$ common neighbours in $V_1$ since parts have size at most $n/2+\delta n$. If in a relevant orientation $uv$ is an edge, then the set $W_1$ of out-neighbours of $v$ in $V_1$ which are also in-neighbours of $u$ has size at most $40 \delta n$. This is due to \Cref{lem:embedding} (applied with $W_1$ and $W_2=V_2\setminus \{u,v\}$) which allows us to find a path of length $2k-2$ starting and ending in $W_1$, which in turn can be completed into a $C_{2k+1}$ using $uv$. This will severely reduce the number of possible orientations of edges incident to $u$ or $v.$ More precisely, the edges from $u$ and $v$ to their common neighbours can be oriented in at most 
    \vspace{-0.1cm}
    $$\sum_{i=0}^{40\delta n} \binom{d(u,v)}{i} \cdot 4^{i} \cdot 3^{d(u,v)-i}\le 41\delta n \cdot \binom{d(u,v)}{40\delta n} \cdot 4^{40\delta n} \cdot 3^{d(u,v)-40\delta n}\le  4^{0.99d(u,v)}.$$ 
    The same bound analogously holds if $vu$ is the edge instead. In particular, there are at most
    \vspace{-0.1cm}
    $$2^{d(u)+d(v)-0.02d(u,v)}\le 2^{n-n/1000}$$
    \vspace{-1cm}
    
    possible orientations of edges incident to $u$ or $v$ (we are using that both $u$ and $v$ have degree at most $n/2+ \delta n+800\delta n$ and that $\delta$ is small). In particular, the total number of orientations of $G \setminus \{u,v\}$ is at least 
    \vspace{-0.1cm}
    $$2^{\floor{n^2/4}+m-1}/2^{n-n/1000} \ge 2^{\floor{(n-2)^2/4}+m+2}$$
    \vspace{-0.3cm}and we are done.
\end{proof}
 
\section{Concluding remarks and open problems}

A classical result of Erd\H os and Gallai \cite{EG59} states that $\ex(n,P_k)\leq \frac{(k-1)n}{2}$ and this is tight when $k$ divides $n$. (With a slight abuse of notation, here $P_k$ referred to the unoriented path of length $k$.) The tightness of this bound implies that $D(n,P_k)\geq 2^{\frac{(k-1)n}{2}}$ when $k$ divides $n$, showing that our Theorem \ref{thm:path} is close to being optimal. Our bound coming from \Cref{lem:algorithm} is actually slightly better than $2^{3kn}$, namely $2^{(1+o_k(1))2kn}$, where $o_k(1)\rightarrow 0$ as $k\rightarrow \infty$. It seems not too hard to improve the constant further. We pose the following question.

\begin{question}
    Let $k$ be a positive integer. Does there exist $n_0=n_0(k)$ such that for every $n\geq n_0$, we have $D(n,P_k)\leq 2^{\frac{(k-1)n}{2}}$?
\end{question}

We were able to answer the question in the affirmative for $k\leq 3$. For general $k$, proving an upper bound of the form $2^{(1+o_k(1))kn/2}$ would already be interesting.

Turning to general oriented forests $H$, we have seen that $D(n,H)=2^{\Theta(\ex(n,F)\log n)}$ when $H$ is not $1$-almost antidirected (as before, $F$ is the underlying undirected graph). We also proved that the logarithmic factor is not present when $H$ is $1$-almost antidirected. However, even in that case, $D(n,H)$ need not be very close to $2^{\ex(n,F)}$ as the following example shows. Let $H$ be the oriented star on $k+1$ vertices whose edges are all oriented out from the centre. The underlying undirected graph has Tur\'an number about $\frac{(k-1)n}{2}$. On the other hand, we claim that for every $\eps>0$ there exists $k$ such that for all sufficiently large $n$, we have $D(n,H)\geq 2^{(1-\eps)k n}$. Indeed, given $\eps>0$, let $k$ be large and take a $(1-\frac{\eps}{2})2k$-regular graph $G_0$ on $2k$ vertices. If $k$ is large enough, using standard Chernoff estimates we can show that in a random orientation of $G_0$ the out-degree of every vertex is at most $(1-\frac{\eps}{4})k$ and in particular the orientation is $H$-free, with probability at least $1/2$. Hence, there are at least $\frac{1}{2}2^{(1-\frac{\eps}{2})2k^2}\geq 2^{(1-\eps)k|V(G_0)|}$ $H$-free orientations of $G_0$. So for large $n$ we can take $G$ to be the union of $\frac{n}{2k}$ vertex-disjoint copies of $G_0$, and then $D(G,H)\geq 2^{(1-\eps)kn}$.

However, it remains possible that there exists an absolute constant $C$ such that for every $1$-almost antidirected oriented tree $H$ there exists $n_0=n_0(H)$ such that for all $n\geq n_0$, we have $D(n,H)\leq 2^{C\ex(n,F)}$.

Turning to non-bipartite underlying graphs, Proposition \ref{prop:general} shows that $\log_2 D(n,H)$ is asymptotically $\ex(n,F)$ in this case. On the other hand, one can construct non-bipartite examples for which $D(n,H)>2^{\ex(n,F)}$ even for arbitrarily large $n$. For example, let $F$ be the complete bipartite graph $K_{5,5}$ with a path of length $4$ placed in one of the parts. Moreover, let $H$ be an orientation of $F$ in which every vertex has at least two out-neighbours and at least two in-neighbours in the other part and in which the path of length $4$ is oriented in a not \aad way (see Figure \ref{fig:path} for such an orientation of the path). Then $\ex(n,F)=\frac{n^2}{4}+O(n)$, but if $G$ is the complete tripartite graph with parts of size $n/2,n/4,n/4$, then $D(G,H)\geq 2^{n^2/4}\cdot (\frac{n}{4})!$, which shows that $D(n,H)>2^{\ex(n,F)}$.

One might also try to determine $D(n,H)$ more precisely when $H$ is a
forest which is not $1$-almost antidirected. Here it was observed by Alon
that if the
maximum degree of the forest $H$ is $d$, then $\log D(n,H)= O(n d \log n)$ (now the implied constant does not depend on $H$).
On the other hand, by an obvious modification of our lower bound
construction, one can show that there are oriented trees with maximum
degree $d$ such that $\log D(n,H)= \Omega(n d \log n)$. This determines
$\log D(n,H)$ up to an absolute constant factor for many, but not for all,
non $1$-almost antidirected forests~$H$.

\noindent \textbf{Acknowledgements.} We are grateful to Noga Alon and Rob Morris for helpful comments on a draft of this paper.

\bibliography{bib}

\providecommand{\bysame}{\leavevmode\hbox to3em{\hrulefill}\thinspace}
\providecommand{\MR}{\relax\ifhmode\unskip\space\fi MR }
\providecommand{\MRhref}[2]{%
  \href{http://www.ams.org/mathscinet-getitem?mr=#1}{#2}
}
\providecommand{\href}[2]{#2}
\begin{thebibliography}{10}

\bibitem{AKMP14}
P.~Allen, Y.~Kohayakawa, G.O. Mota, and R.F. Parente, \emph{On the number of
  orientations of random graphs with no directed cycles of a given length}, The
  Electronic Journal of Combinatorics (2014), P1--52.

\bibitem{ABKS}
N.~Alon, J.~Balogh, P.~Keevash, and B.~Sudakov, \emph{The number of edge
  colorings with no monochromatic cliques}, J. London Math. Soc. (2)
  \textbf{70} (2004), no.~2, 273--288.

\bibitem{alon-yuster}
N.~Alon and R.~Yuster, \emph{The number of orientations having no fixed
  tournament}, Combinatorica \textbf{26} (2006), no.~1, 1--16.

\bibitem{mota}
P.~Ara\'ujo, F.~Botler, and G.O. Mota, \emph{Counting graph orientations with
  no directed triangles}, preprint arXiv:2005.13091 (2020).

\bibitem{Bal06}
J.~Balogh, \emph{A remark on the number of edge colorings of graphs}, European
  Journal of Combinatorics \textbf{27} (2006), no.~4, 565--573.

\bibitem{balogh03}
J.~Balogh, B.~Bollob\'{a}s, and M.~Simonovits, \emph{The number of graphs
  without forbidden subgraphs}, J. Combin. Theory Ser. B \textbf{91} (2004),
  no.~1, 1--24.

\bibitem{BS1}
J.~Balogh and W.~Samotij, \emph{The number of {$K_{m,m}$}-free graphs},
  Combinatorica \textbf{31} (2011), no.~2, 131--150.

\bibitem{BS2}
\bysame, \emph{The number of {$K_{s,t}$}-free graphs}, J. Lond. Math. Soc. (2)
  \textbf{83} (2011), no.~2, 368--388.

\bibitem{BS74}
J.~Bondy and M.~Simonovits, \emph{Cycles of even length in graphs}, Journal of
  Combinatorial Theory, Series B \textbf{16} (1974), no.~2, 97--105.

\bibitem{CKMM20}
M.~Collares, Y.~Kohayakawa, R.~Morris, and G.O. Mota, \emph{Counting restricted
  orientations of random graphs}, Random Structures \& Algorithms \textbf{56}
  (2020), no.~4, 1016--1030.

\bibitem{E-R}
P.~Erd\H{o}s, \emph{Some new applications of probability methods to
  combinatorial analysis and graph theory}, Proceedings of the {F}ifth
  {S}outheastern {C}onference on {C}ombinatorics, {G}raph {T}heory and
  {C}omputing ({F}lorida {A}tlantic {U}niv., {B}oca {R}aton, {F}la., 1974),
  1974, pp.~39--51. Congressus Numerantium, No. X.

\bibitem{EG59}
P.~Erd\H{o}s and T.~Gallai, \emph{On maximal paths and circuits of graphs},
  Acta Math. Acad. Sci. Hungar. \textbf{10} (1959), 337--356.

\bibitem{EFR86}
P.~Erd{\H o}s, P.~Frankl, and V.~R{\"o}dl, \emph{The asymptotic number of
  graphs not containing a fixed subgraph and a problem for hypergraphs having
  no exponent}, Graphs and Combinatorics \textbf{2} (1986), no.~1, 113--121.

\bibitem{EKR76}
P.~Erd{\H o}s, D.J. Kleitman, and B.L. Rothschild, \emph{Asymptotic enumeration
  of {$K_n$}-free graphs}, International Colloquium on Combinatorial Theory,
  1976.

\bibitem{FMS20}
A.~Ferber, G.~McKinley, and W.~Samotij, \emph{Supersaturated sparse graphs and
  hypergraphs}, International Mathematics Research Notices (2020), no.~2,
  378--402.

\bibitem{FS13}
Z.~F{\"u}redi and M.~Simonovits, \emph{The history of degenerate (bipartite)
  extremal graph problems}, Erd{\H{o}}s Centennial, Springer, 2013,
  pp.~169--264.

\bibitem{K-W}
D.J. Kleitman and K.J. Winston, \emph{On the number of graphs without
  {$4$}-cycles}, Discrete Math. \textbf{41} (1982), no.~2, 167--172.

\bibitem{regularity}
J.~Koml\'{o}s and M.~Simonovits, \emph{Szemer\'{e}di's regularity lemma and its
  applications in graph theory}, Combinatorics, {P}aul {E}rd\H{o}s is eighty,
  {V}ol. 2 ({K}eszthely, 1993), Bolyai Soc. Math. Stud., vol.~2, J\'{a}nos
  Bolyai Math. Soc., Budapest, 1996, pp.~295--352.

\bibitem{KM13}
L.~Kozma and S.~Moran, \emph{Shattering, graph orientations, and connectivity},
  The Electronic Journal of Combinatorics \textbf{20} (2013), no.~3, P.44.

\bibitem{LPRS09}
H.~Lefmann, Y.~Person, V.~R{\"o}dl, and M.~Schacht, \emph{On colourings of
  hypergraphs without monochromatic {F}ano planes}, Combinatorics, Probability
  and Computing \textbf{18} (2009), no.~5, 803--818.

\bibitem{MS16}
R.~Morris and D.~Saxton, \emph{The number of {$C_{2\ell}$}-free graphs},
  Advances in Mathematics \textbf{298} (2016), 534--580.

\bibitem{Paj85}
A.~Pajor, \emph{Sous-espaces $\ell_1^n$ des espaces de {B}anach}, Editions
  Hermann, 1985.

\bibitem{PSY17}
O.~Pikhurko, K.~Staden, and Z.B. Yilma, \emph{The {Erd{\H{o}}s}--{Rothschild}
  problem on edge-colourings with forbidden monochromatic cliques},
  Mathematical proceedings of the {C}ambridge {P}hilosophical {S}ociety
  \textbf{163} (2017), no.~2, 341--356.

\bibitem{PY12}
O.~Pikhurko and Z.B. Yilma, \emph{The maximum number of {$K_3$}-free and
  {$K_4$}-free edge 4-colorings}, Journal of the London Mathematical Society
  \textbf{85} (2012), no.~3, 593--615.

\bibitem{simonovits-stability}
M.~Simonovits, \emph{A method for solving extremal problems in graph theory,
  stability problems}, Theory of {G}raphs ({P}roc. {C}olloq., {T}ihany, 1966),
  Academic Press, New York, 1968, pp.~279--319.

\bibitem{yuster}
R.~Yuster, \emph{The number of edge colorings with no monochromatic triangle},
  J. Graph Theory \textbf{21} (1996), no.~4, 441--452.

\end{thebibliography}
    \bibliographystyle{amsplain}

\end{document}